\theoremstyle{plain}% default 
\newtheorem{theorem}{Theorem}[section] 
\newtheorem{lemma}[theorem]{Lemma} 
\newtheorem{definition}{Definition}[section]
\newtheorem{example}{Example}[section]
\newtheorem{proposition}[theorem]{Proposition} 
\newtheorem{corollary}{Corollary}[section]
\newtheorem{remark}[theorem]{Remark}
\DeclareMathOperator{\block}{block}
\title{Cyclic Symmetry of the Scaled Simplex}
\author[H.\ Thomas]{Hugh Thomas}
\email{hthomas@unb.ca}
\author[N.\ Williams]{Nathan Williams}
\email{will3089@math.umn.edu}
\begin{document}

\begin{abstract}
Let $\mathcal{Z}_m^k$ consist of the $m^k$ alcoves contained in the $m$-fold dilation of the fundamental alcove of the type $A_k$ affine hyperplane arrangement.  As the fundamental alcove has a cyclic symmetry of order $(k+1)$, so does $\mathcal{Z}_m^k$.  By bijectively exchanging the natural poset structure of $\mathcal{Z}_{m}^k$ for a natural cyclic action on a set of words, we prove that $(\mathcal{Z}_{m}^k,\prod_{i=1}^{k} \frac{1-q^{m i}}{1-q^i},C_{k+1})$ exhibits the cyclic sieving phenomenon.
\end{abstract}

\maketitle

%------------------------------------------------------------------------------------------------------------------------
\section{Introduction}
\label{sec:intro}
%------------------------------------------------------------------------------------------------------------------------

Let $\mathcal{Z}_m^k$ consist of the $m^k$ alcoves contained in the $m$-fold dilation of the fundamental alcove of the type $A_k$ affine hyperplane arrangement.  As the fundamental alcove has a cyclic symmetry of order $(k+1)$, so does $\mathcal{Z}_m^k$.  Let $\mathcal{W}_m^k$ be the set of words of length $(k+1)$ on $\mathbb{Z}/m\mathbb{Z}$ with sum $(m-1)\pmod m$, with
the order $(k+1)$ cyclic action given by rotation.  As the orbit structure of $\mathcal{W}_{m}^k$ is easily understood, we determine the orbit structure of $\mathcal{Z}_m^k$ with the following theorem.

\begin{theorem}
\label{thm:forward}
	There is an equivariant bijection from $\mathcal{Z}_{m}^k$ under its cyclic action to $\mathcal{W}_{m}^k$ under rotation.
\end{theorem}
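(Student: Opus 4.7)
My plan is to exhibit an explicit equivariant bijection $\Phi \colon \mathcal{Z}_m^k \to \mathcal{W}_m^k$. Since $|\mathcal{Z}_m^k| = m^k = |\mathcal{W}_m^k|$, the task reduces to defining $\Phi$, checking that its image lies in $\mathcal{W}_m^k$ (the sum condition), checking equivariance with respect to the cyclic actions, and checking injectivity.

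First I would parametrize the alcoves by the type data of their vertices. Each alcove in $\mathcal{Z}_m^k$ is a simplex with $k+1$ vertices in the coweight lattice $P^\vee$, and the isomorphism $P^\vee/Q^\vee \cong \mathbb{Z}/(k+1)\mathbb{Z}$ assigns a canonical \emph{type} $0,1,\ldots,k$ to each vertex. Writing $u_i(A)$ for the unique type-$i$ vertex of $A$, the rotation $\pi$ shifts types by $+1$, so $u_i(\pi A) = \pi(u_{i-1}(A))$: this realizes the cyclic action on $\mathcal{Z}_m^k$ as a cyclic relabeling of the typed vertex tuple. I would then define $\Phi(A) = (a_0,\ldots,a_k)$ by choosing a single $\mathbb{Z}/m\mathbb{Z}$-valued function $f \colon P^\vee \to \mathbb{Z}/m\mathbb{Z}$ and setting $a_i := f(\pi^{-i} u_i(A))$. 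By construction $\Phi(\pi A)_i = f(\pi^{-i} u_i(\pi A)) = f(\pi^{-(i-1)} u_{i-1}(A)) = \Phi(A)_{i-1}$, so $\Phi$ is equivariant.

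The main obstacle is choosing $f$ so that the sum condition $\sum_i a_i \equiv m-1 \pmod m$ holds for every alcove. Naive choices — e.g., $f$ essentially counting hyperplanes between $A$ and a fixed face of the dilated simplex — fail: already in $k=2$, the upward-oriented alcoves contribute distance-sum $m-1$ while the downward-oriented ones contribute $m-2$, differing by an orientation-dependent integer. I expect to resolve this by exploiting the lattice gap $|P^\vee/Q^\vee| = k+1$: choose $f$ so that the orientation offset is absorbed by a cyclically symmetric shift that preserves the equivariance above. Small cases ($k=1$ and $k=2$ with small $m$) would guide the exact form of $f$. Once $\Phi$ is correctly defined, injectivity is immediate, since the vertex tuple of $A$ (hence $A$ itself) can be read back off from $\Phi(A)$, and bijectivity then follows from cardinality.
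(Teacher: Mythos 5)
Your framework is appealing---parametrizing an alcove by its typed vertex tuple in $P^\vee$ does make equivariance formally automatic---but as written this is a plan rather than a proof, and the two places where you defer the work are exactly where all the content of the theorem lives. First, the map is never actually defined: you acknowledge that the naive choices of $f$ fail to produce the sum condition $\sum_i a_i \equiv m-1 \pmod m$ (the up/down orientation discrepancy you observe for $k=2$ is real), and you only express the expectation that some cyclically symmetric correction will fix it. Until a specific $f$ is written down and the sum condition is verified for \emph{every} alcove of $\mathcal{Z}_m^k$, there is no candidate bijection to analyze.

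Second, and more seriously, the claim that ``injectivity is immediate, since the vertex tuple of $A$ can be read back off from $\Phi(A)$'' does not hold: $\Phi(A)$ records only the values $f(\pi^{-i}u_i(A)) \in \mathbb{Z}/m\mathbb{Z}$, and any $f$ valued in a finite cyclic group collapses the lattice data, so distinct alcoves can a priori share the same word. Recovering $A$ from $\Phi(A)$ requires an actual inversion procedure, and this is precisely the step that the paper finds difficult: there the forward map is easy (the ``bijaction'' $w$ reading off first letters of an orbit, whose image is shown to sum to $m-1$ by a telescoping argument), while the inverse occupies two full sections (the partitioned-word algorithms, the tree $\mathcal{T}_m^*$, and the existence/uniqueness of the successful partition). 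Your route, if completed, would be genuinely different from the paper's chain $\mathcal{Z}_m^k \simeq \mathcal{Y}_m^k \simeq \mathcal{X}_m^k \simeq \mathcal{W}_m^k$ through cores and length-$k$ words, and a direct vertex-based formula would be a nice contribution; but at present both the definition of the map and its injectivity are missing, so the proof has a genuine gap. (A minor further point: your computation gives $\Phi(\pi A)_i=\Phi(A)_{i-1}$, i.e.\ right rotation, where the paper uses left rotation; this is harmless but should be reconciled when you fix the generator of $C_{k+1}$.)
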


%[FINISH]: can we give this bijection directly?
%We may state the bijection immediately: given an alcove in $\mathcal{Z}_m^k$, let $H_{\alpha_i,p_i}$ for $0 \leq i \leq k$ be its bounding hyperplanes (we take $\alpha_0$ to be the highest root).

%\begin{enumerate}
%	\item If the alcove lies below $H_{\alpha_0,p_0}$, then it is labeled by $p_1 p_2 \ldots p_k (m-p_0)$.
%	\item If it instead lies above $H_{\alpha_0,p_0}$, then it is labeled by $p_0 (m-p_1)(m-p_2)\ldots (m-p_k)$.
%\end{enumerate}

Figure~\ref{fig:ex1} illustrates this bijection for $\mathcal{Z}_4^2$.

\begin{figure}[ht]
\begin{center}
\includegraphics[height=3.3in]{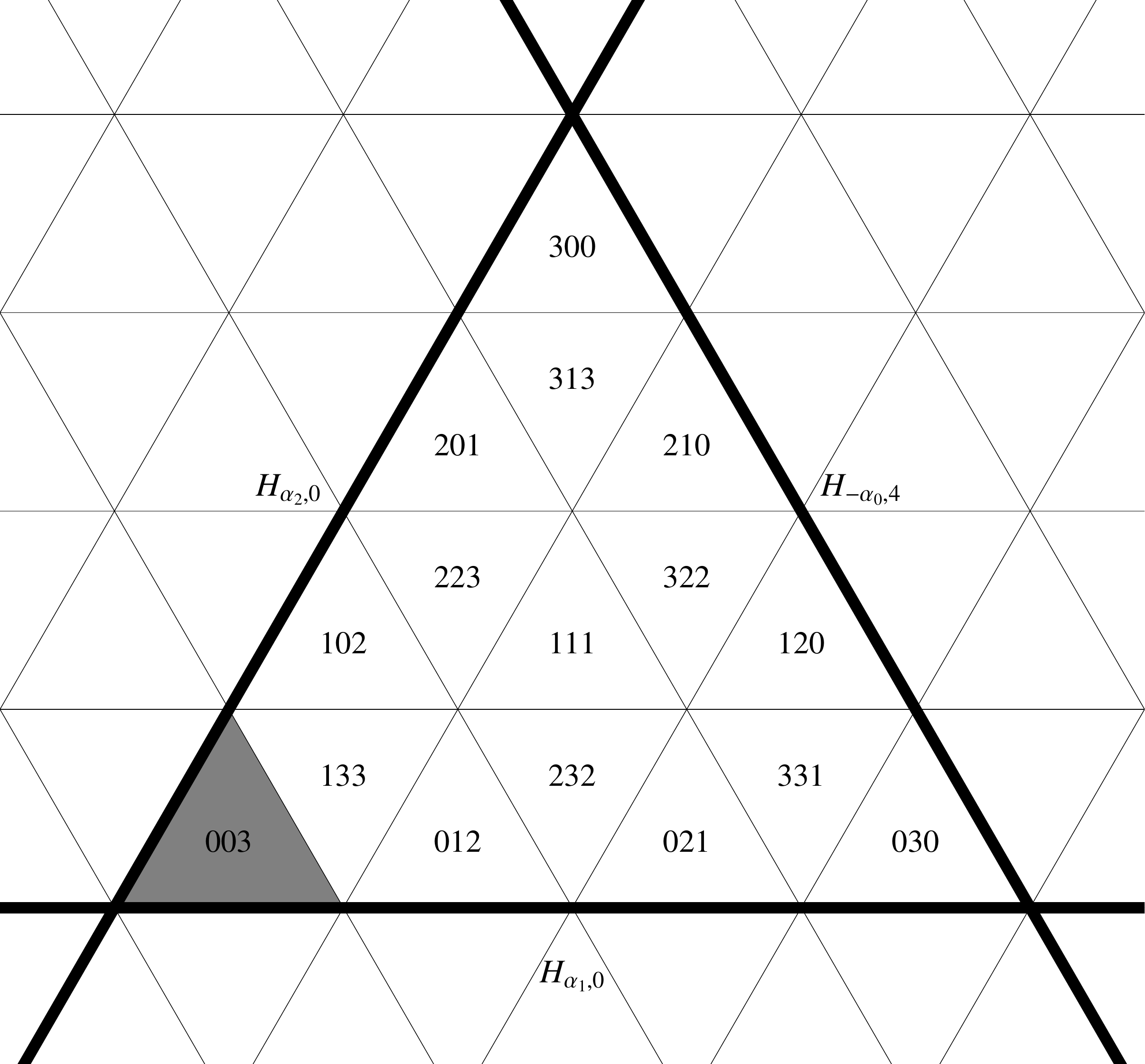}
\end{center}
\caption{The 16 alcoves of $\mathcal{Z}_4^2$ are those alcoves contained within the thick black lines.  Each alcove is labeled with its corresponding word in $\mathcal{W}_4^2$.  There are 5 orbits of size $3$ and a single orbit (the alcove in the center) of size $1$.}
\label{fig:ex1}
\end{figure}

The paper is organized as follows.  We give a brief history of the relevant work on this problem in Section~\ref{sec:history}.  This section also serves as an additional introduction to the paper.  In Sections~\ref{sec:alcores} and ~\ref{sec:posety}, we follow C.\ Berg and M.\ Zabrocki by interpreting $\mathcal{Z}_{m}^k$ as a poset $\mathcal{Y}_{m}^k$ on $(k+1)$ cores~\cite{berg2011symmetries}.  We do not give the cyclic action directly on the cores---in Section~\ref{sec:wordsthatend}, we give a combinatorial description of a cyclic action on a poset $\mathcal{X}_{m}^k$ on words of length $k$ on $\mathbb{Z}/m\mathbb{Z}$, and we then show that $\mathcal{X}_m^k$ is isomorphic to $\mathcal{Y}_m^k$.  In Section~\ref{sec:wordsthatsum}, we use the cyclic sieving phenomenon to analyze the orbit structure of $\mathcal{W}_m^k$ under rotation and we give the forward direction for an equivariant bijection between $\mathcal{X}_{m}^k$ and $\mathcal{W}_m^k$.  This bijection exchanges the natural poset structure on $\mathcal{X}_{m}^k$ for a natural cyclic action on $\mathcal{W}_m^k$.  To construct the more difficult inverse map, we first generalize in Section~\ref{sec:dendro} and then restrict in Section~\ref{sec:proof}.  %We conclude in Section~\ref{sec:conclusion} by giving the direct bijection from $\mathcal{Z}_m^k$ to $\mathcal{W}_m^k$ obtained by composing the above maps.

In summary, we prove Theorem~\ref{thm:forward} by showing that there are equivariant bijections between the following objects:

$$\mathcal{Z}_{m}^k \underset{\text{Sections~\ref{sec:alcores} and ~\ref{sec:posety}}}{\simeq} \mathcal{Y}_{m}^k \underset{\text{Section~\ref{sec:wordsthatend}}}{\simeq} \mathcal{X}_{m}^k \underset{\text{Sections~\ref{sec:wordsthatsum},~\ref{sec:dendro}, and~\ref{sec:proof}}}{\simeq} \mathcal{W}_{m}^k.$$

%$$\underset{\text{Alcoves under geometric rotation}}{\underbrace{\mathcal{Z}_{m}^k}} \simeq \underset{(k+1)-\text{cores}}{\underbrace{\mathcal{Y}_{m}^k}} \simeq \underset{\text{Words with a complicated rotation}}{\underbrace{\mathcal{X}_{m}^k}} \simeq \underset{\text{Words under rotation}}{\underbrace{\mathcal{W}_{m}^k}}.$$
\vspace{30pt}
%------------------------------------------------------------------------------------------------------------------------
\section{History}
\label{sec:history}
%------------------------------------------------------------------------------------------------------------------------

At a 2007 conference in Rome, R.\ Suter gave a talk in which he defined a surprising cyclic symmetry of order $(k+1)$ of a subposet $\mathcal{Y}^k_2$ of Young's lattice, for each $k \in \mathbb{N}$.  This was based on his work in 2002 to understand the abelian ideals of complex simple Lie algebras. The symmetry is due to the fact that the Dynkin diagram of $\tilde{A}_{k}$ is a cycle of length $(k+1)$ (see~\cite{suter2002young} and~\cite{suter2004abelian}).

To give the flavor of R.\ Suter's result, let $\mathcal{Y}^k_2$ be the subposet of Young's lattice containing those partitions $\lambda=(\lambda_1 \geq \lambda_2 \geq \ldots \geq \lambda_\ell)$ for which $\lambda_1+\ell \leq k$.  The symmetry is revealed by drawing the Hasse diagram of $\mathcal{Y}^k_2$ and then remembering only the underlying graph structure.  Figure~\ref{fig:suter} illustrates this for $k=4$.

\begin{figure}[ht]
\begin{center}
$\begin{array}{cc}
\begin{gathered}\includegraphics[height=2.5in]{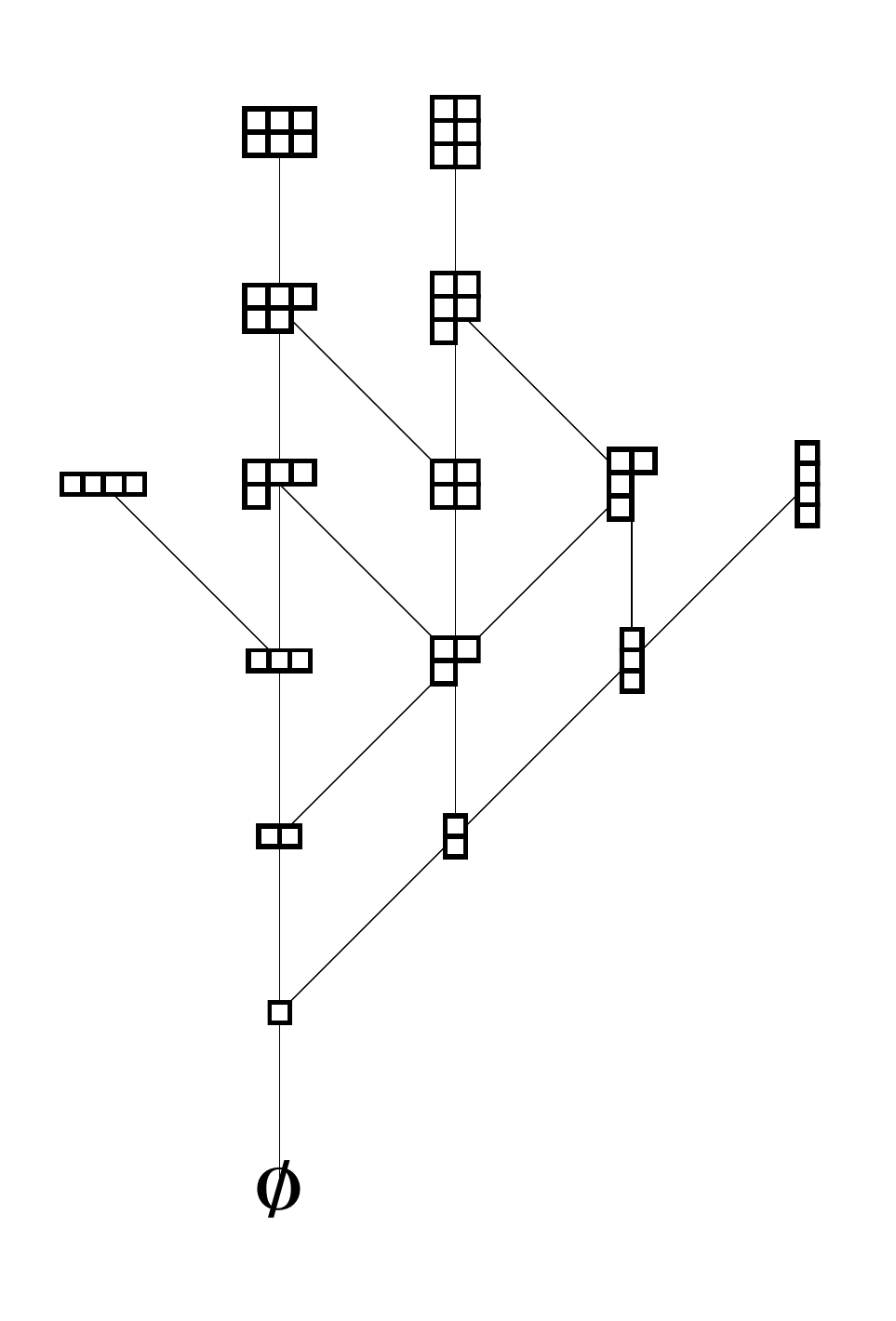}\end{gathered} &
\begin{gathered}\includegraphics[height=2.5in]{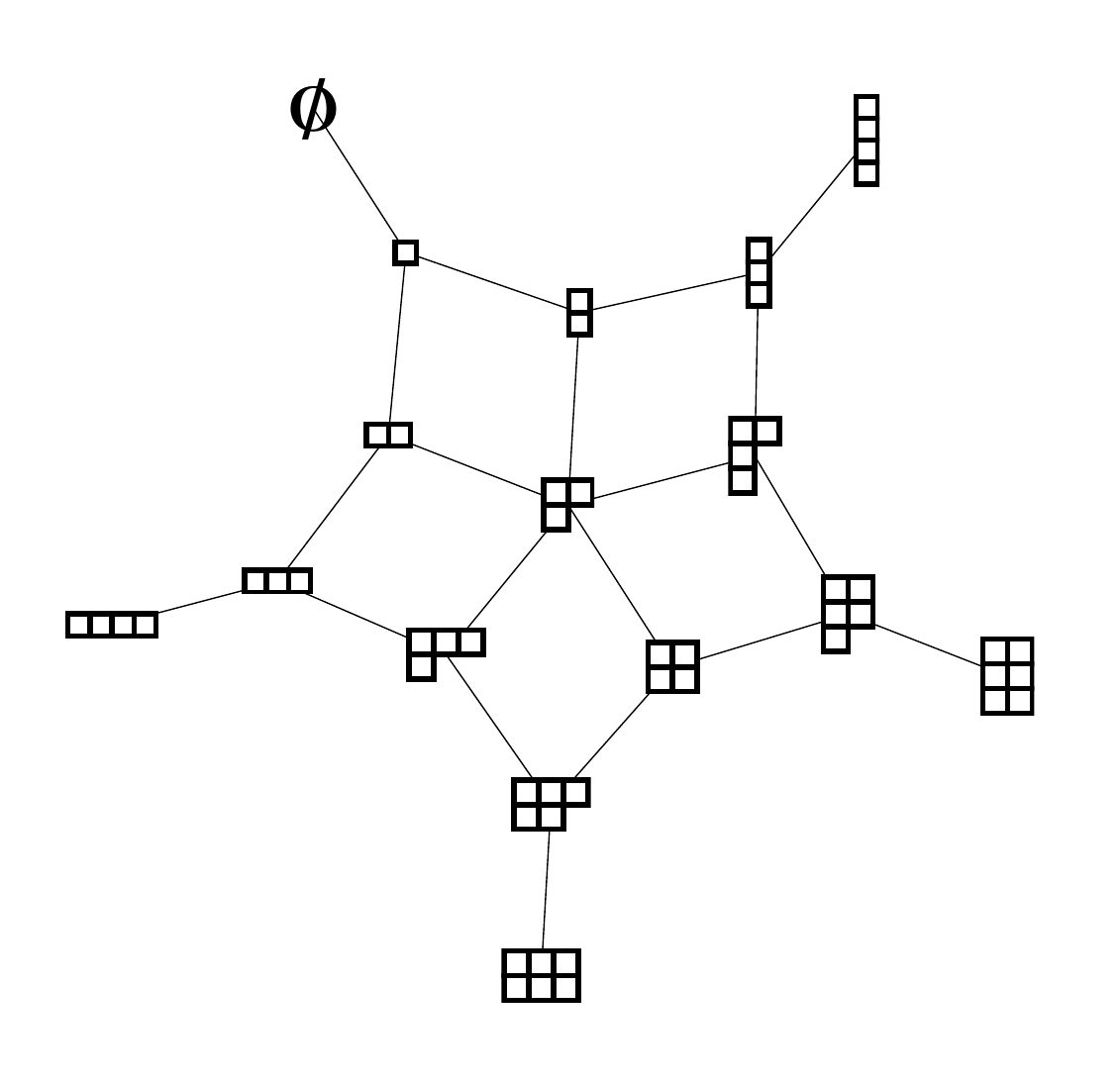}\end{gathered} \\
\end{array}$
\end{center}
\caption{The Hasse diagram of $\mathcal{Y}^4_2$ and its underlying graph.}
\label{fig:suter}
\end{figure}

After seeing the striking rotational symmetry on a transparency in R.\ Suter's talk, V.\ Reiner conjectured that the graphs would exhibit the cyclic sieving phenomenon~\cite{reinerpc}.  D.\ Stanton later refined this to the conjecture that there is an equivariant bijection between R.\ Suter's partitions under their cyclic action and binary words of length $(k+1)$ with odd sum under rotation~\cite{stantonpc}.

In 2010, the problem was presented in this form to the second author.  The result was proved by giving such a bijection in~\cite{williamsbijactions}.  This proof allowed for a natural \emph{combinatorial} generalization of the $(k+1)$ cyclic action to certain posets $\mathcal{X}_m^k$ on words of length $k$ on $\mathbb{Z}/m\mathbb{Z}$.  Figure~\ref{fig:x24} illustrates the cyclic symmetry of order 3 for $m=4$ and $k=2$ (compare to Figure~\ref{fig:ex1}).

\begin{figure}[ht]
\begin{center}
$\begin{array}{cc}
\begin{gathered}\includegraphics[height=2.4in]{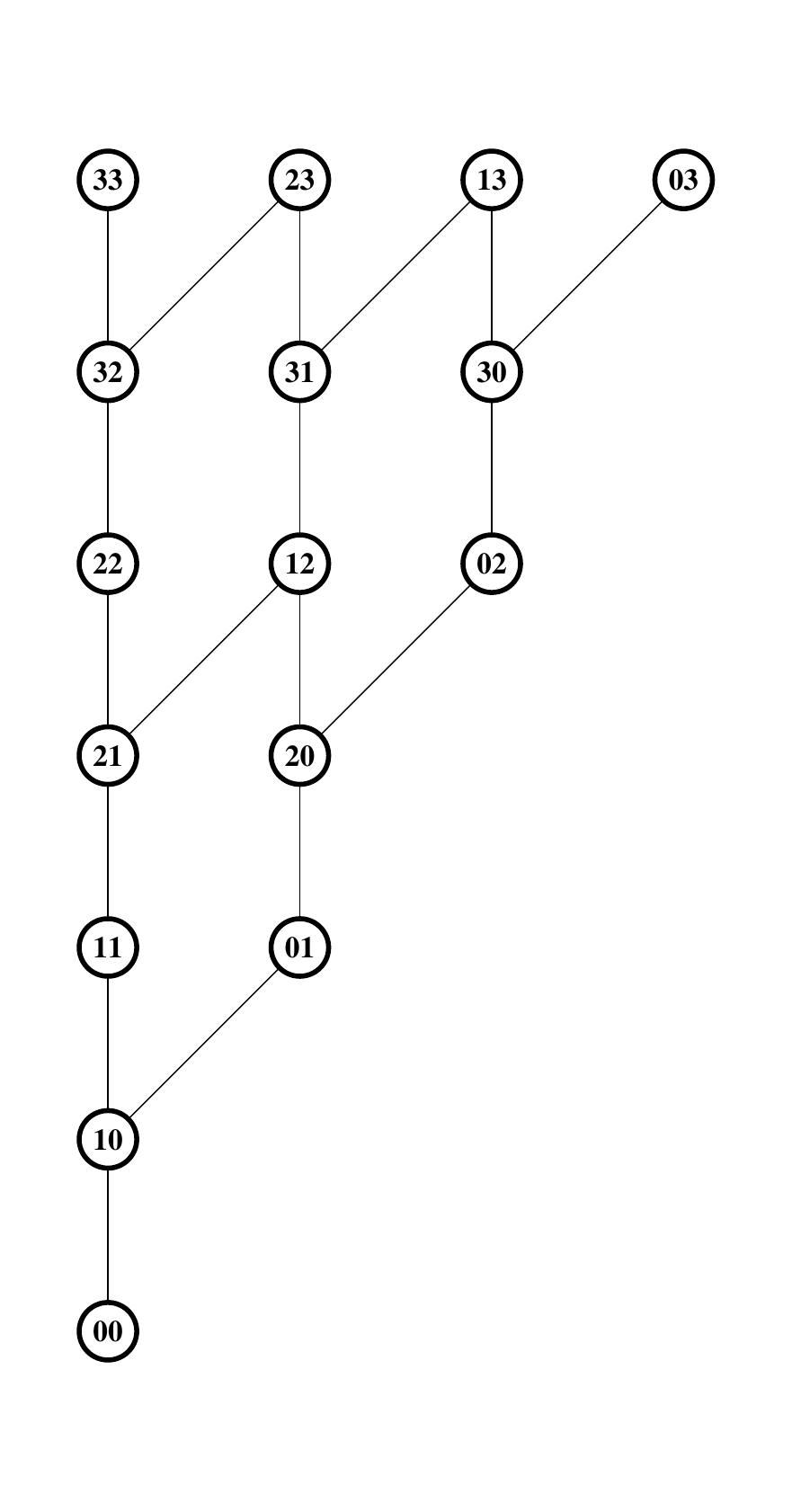}\end{gathered} &
\begin{gathered}\includegraphics[height=2.4in]{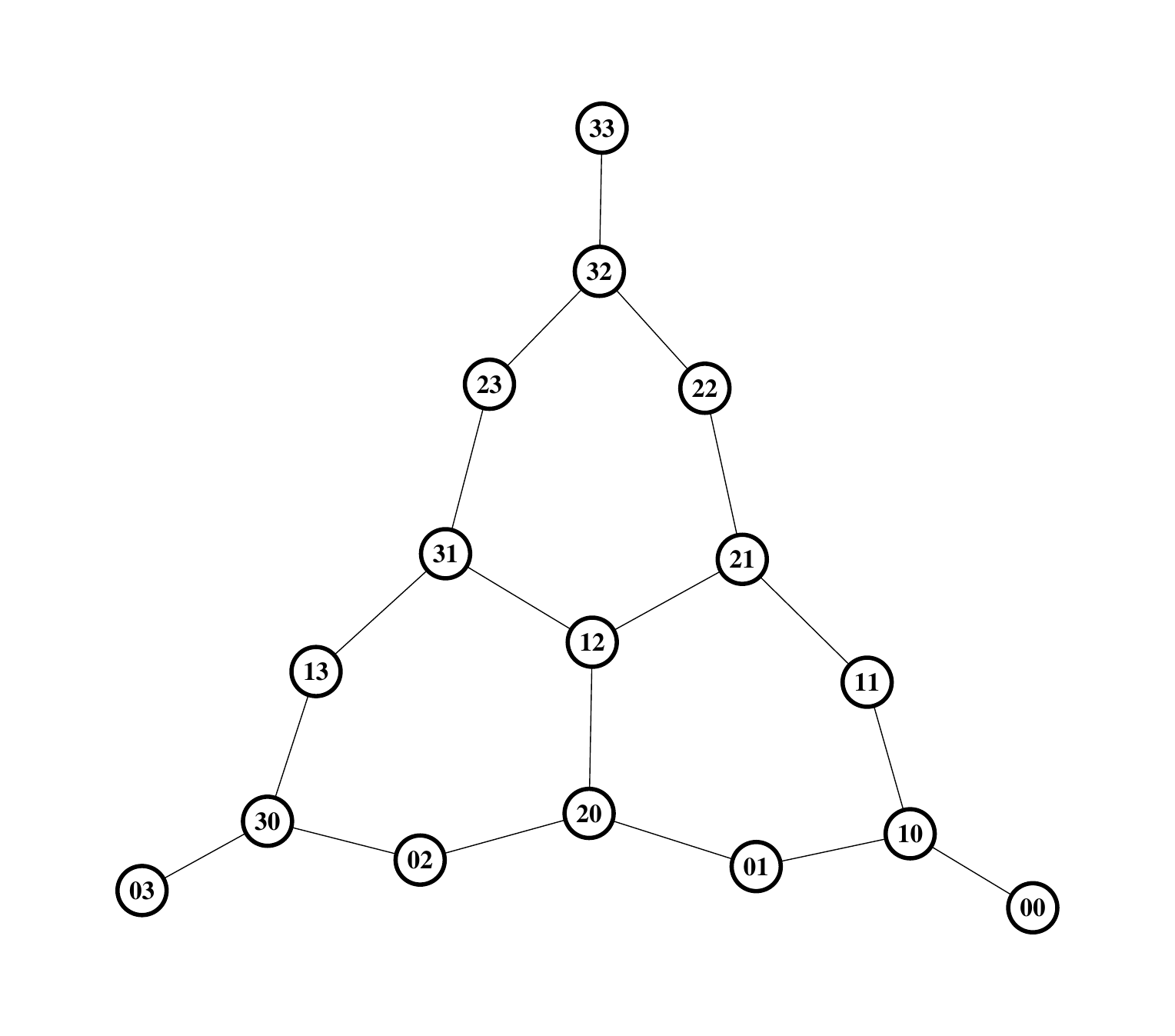}\end{gathered} \\
\end{array}$
\end{center}
\caption{The Hasse diagram of $\mathcal{X}^2_4$ and its underlying graph.}
\label{fig:x24}
\end{figure}

%$\mathcal{W}_m^k$ of length $(k+1)$ on $\mathbb{Z}/m\mathbb{Z}$ with sum $(m-1)\pmod m$ under rotation

No interpretation of the general posets $\mathcal{X}_m^k$ in terms of partitions was given in~\cite{williamsbijactions}.   Even more distressing was that the forward direction of the bijection was generalized to a map from $\mathcal{X}_m^k$ to $\mathcal{W}_m^k$, but the inverse was not found.  Shortly after this result, M.\ Visontai was able to solve a system of linear equations to give a proof that the map was invertible for $m=3$~\cite{visontaibijection}.

From the other direction, M.\ Zabrocki---while looking at the Wikipedia page for Young's Lattice---also came across R.\ Suter's result.  He and C.\ Berg gave a natural \emph{geometric} generalization of R.\ Suter's poset to $\mathcal{Z}_m^k$ as the $m$-fold dilation of the fundamental alcove in type $A_k$, from which the cyclic symmetry is intrinsically obvious~\cite{berg2011symmetries}.  They further gave the correct definition for the poset in terms of partitions, by letting $\mathcal{Y}^k_m$ be a certain order ideal in the $k$-Young's lattice of $(k+1)$-cores.  Figure~\ref{fig:y24} illustrates the cyclic symmetry of order 3 for $\mathcal{Y}^2_4$.

%\footnote{We thank A. Miller for pointing out that it was M. Hardy who made the relevant revisions to Wikipedia.}

\begin{figure}[ht]
\begin{center}
$\begin{array}{cc}
\begin{gathered}\includegraphics[height=2.4in]{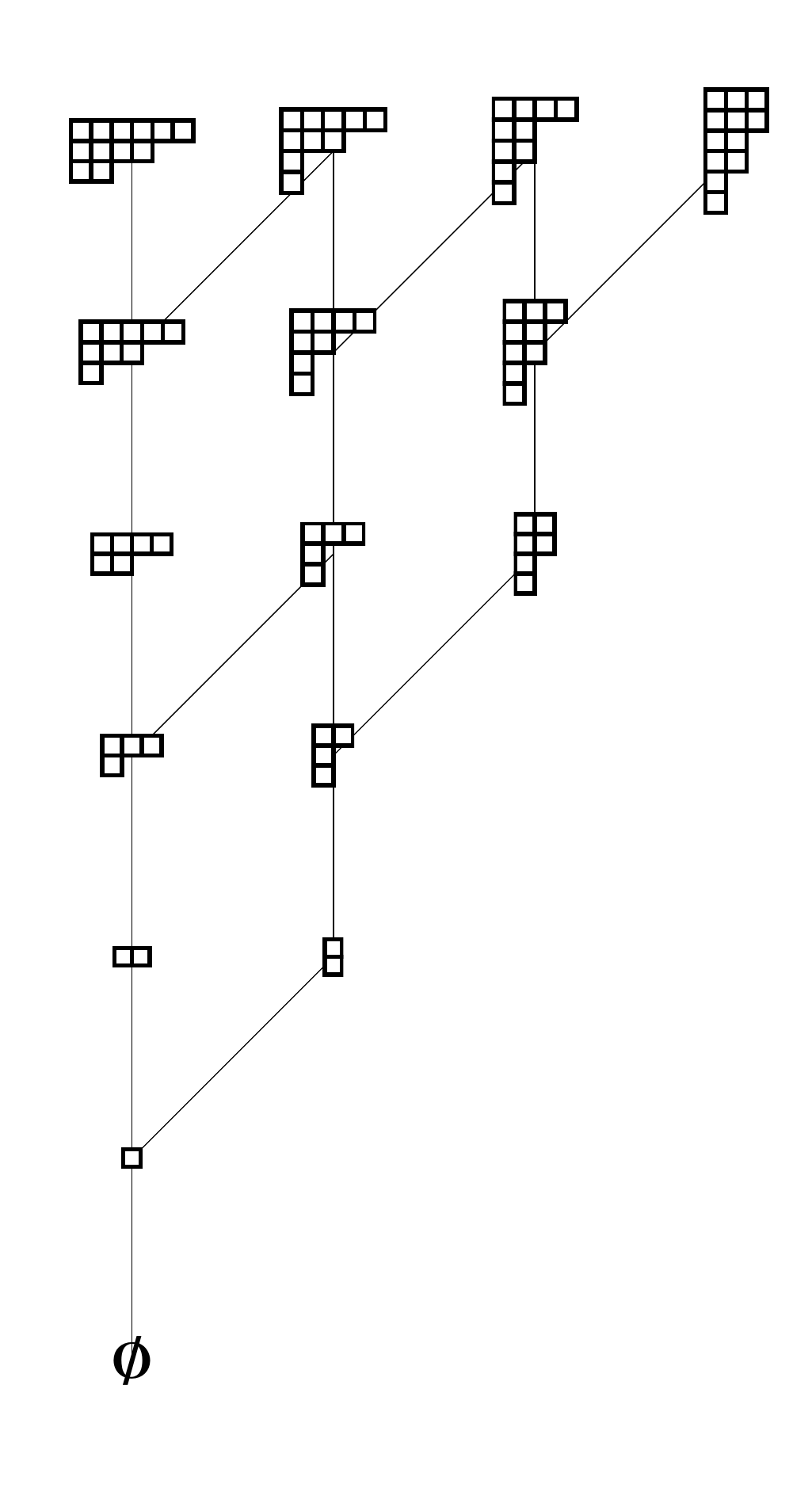}\end{gathered} &
\begin{gathered}\includegraphics[height=2.4in]{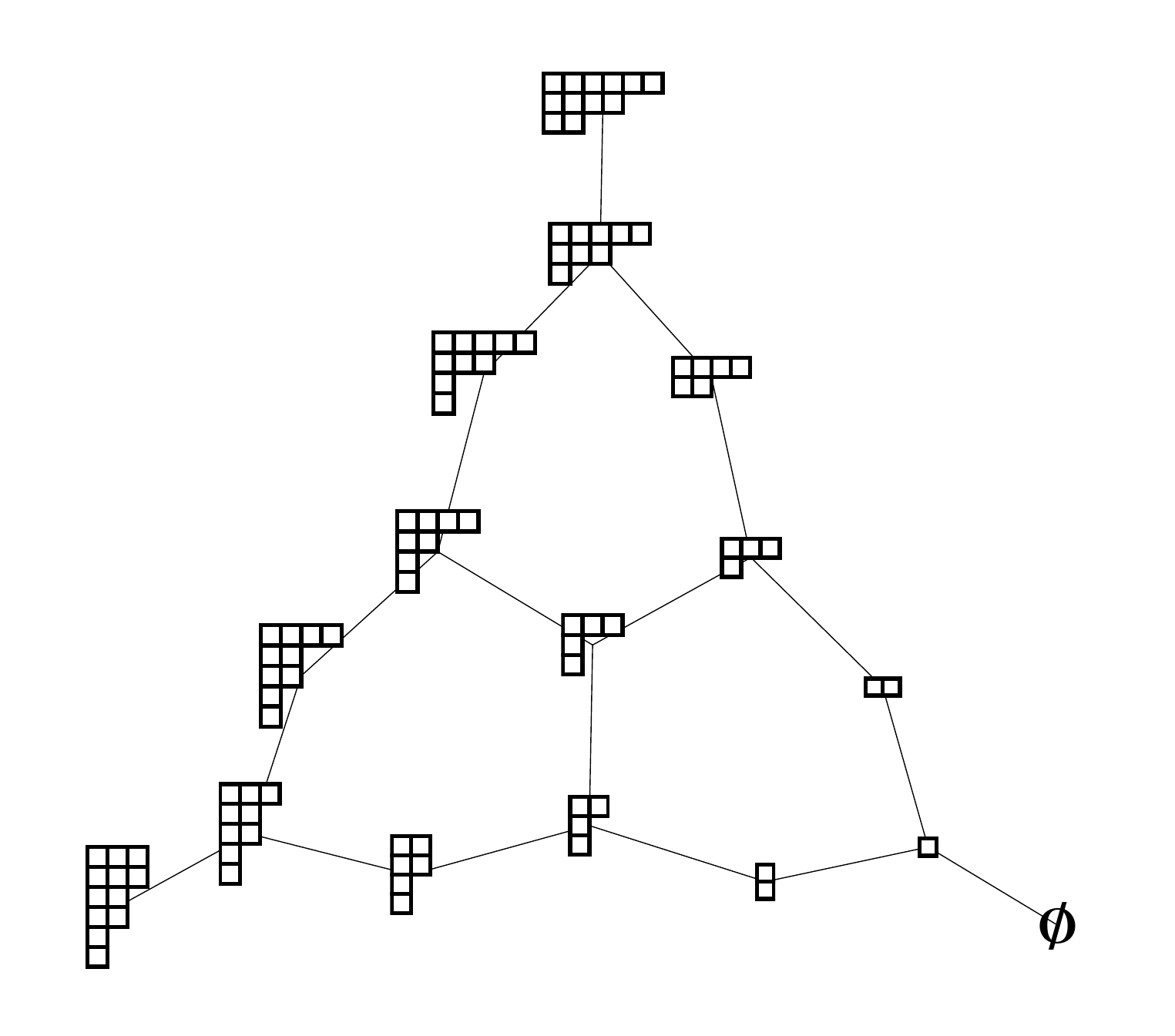}\end{gathered} \\
\end{array}$
\end{center}
\caption{The Hasse diagram of $\mathcal{Y}^2_4$ and its underlying graph.}
\label{fig:y24}
\end{figure}

With these definitions in hand, it is easy to show that $\mathcal{X}_m^k$ and $\mathcal{Y}_m^k$ are isomorphic as posets.  Understanding the cyclic symmetry of $\mathcal{X}_m^k \simeq \mathcal{Y}_m^k \simeq \mathcal{Z}_m^k$ is therefore equivalent to finding the inverse of the map from $\mathcal{X}_m^k$ to $\mathcal{W}_m^k$.  This turned out to be much more difficult than expected, but a proof was finally found at the 2012 Combinatorial Algebra meets Algebraic Combinatorics conference at the Universit\'e du Qu\'ebec \`a Montr\'eal.

\vspace{30pt}
%------------------------------------------------------------------------------------------------------------------------
\section{$\mathcal{Z}_m^k$: A poset of alcoves}
\label{sec:alcores}
%------------------------------------------------------------------------------------------------------------------------

Following M.\ Zabrocki and C.\ Berg in~\cite{berg2011symmetries}, we introduce some geometry and define the poset $\mathcal{Z}^k_m$ as a dilation of the fundamental alcove.

Let $\{\alpha_i\}_{i=1}^k$ be \emph{simple roots} for the $A_k$ root system in the vector space $V$ with a positive definite symmetric bilinear form $\langle \cdot,\cdot \rangle$.  We denote the set of all roots for $A_k$ by $\Phi$.
We write $\alpha_0 = -\sum_{i=1}^k \alpha_i$ for the negative of the \emph{highest root}.%, so that the geometric cyclic symmetry .

For $v \in V$ and $p \in \mathbb{Z}$, define the \emph{hyperplane} $H_{v,p}=\{x \in V | \langle v, x \rangle = p\}.$   The type $A_k$ affine hyperplane arrangement is the set of hyperplanes $\{ H_{\alpha,p} : \alpha \in \Phi \text{ and } p \in \mathbb{Z} \}$.

For $1\leq i\leq k$, let $s_i$ be the \emph{reflection} in the hyperplane $H_{\alpha_i,0}$ and let $s_0$ be the reflection in $H_{-\alpha_0,1}$.  The group generated by $\{s_i\}_{i=0}^k$ is the \emph{affine symmetric group}, which has relations
\begin{align*}
s_i^2 & = 1 \text{ for } 0\leq i \leq k,\\
s_i s_j &= s_j s_i \text{ if } i-j \neq \pm 1,\\
s_i s_{i+1} s_i &= s_{i+1} s_i s_{i+1} \text{ for } 0\leq i \leq k.
\end{align*}

The \emph{dominant chamber} is the region $\{ x : \langle \alpha_i, x \rangle \geq 0 \text{ for } 1\leq i \leq k \}$.  The connected components of $V / \bigcup_{\alpha \in \Phi, p \in \mathbb{Z}} H_{\alpha,p}$ are called \emph{alcoves}.  The \emph{fundamental alcove} is the intersection of the dominant chamber with $\{ x : \langle -\alpha_0, x \rangle \leq 1 \}$.

\begin{definition} \rm
Define a partial order on the alcoves in the dominant chamber by taking the fundamental alcove to be minimal and letting two alcoves have a covering relation when they share a bounding hyperplane (this hyperplane is called a \emph{wall}).
\end{definition}

\begin{definition}[\cite{berg2011symmetries}]
Let $\mathcal{Z}^k_m$ consist of the alcoves contained in the $m$-fold dilation of the fundamental alcove.
\end{definition}

In other words, $\mathcal{Z}^k_m$ contains those alcoves in the dominant chamber bounded by the hyperplane $H_{-\alpha_0,m}$.  Note that $\mathcal{Z}^k_m$ is a dilation of the fundamental alcove, which geometrically exhibits a cyclic symmetry of order $(k+1)$ obtained by permuting $\alpha_1, \alpha_2, \ldots, \alpha_0$.

\vspace{30pt}
%------------------------------------------------------------------------------------------------------------------------
\section{$\mathcal{Y}_m^k$: A subposet of $k$-Young's lattice}
\label{sec:posety}
%------------------------------------------------------------------------------------------------------------------------

We conclude our summary of M.\ Zabrocki and C.\ Berg's results in~\cite{berg2011symmetries}.  We encode the geometry of the dominant chamber using partition cores and state the result that $\mathcal{Z}^k_m$ restricts to a poset $\mathcal{Y}^k_m$ containing $(k+1)$-cores lying below certain stacks of rectangles.  We do not know in general of a simple combinatorial rule to perform the inherited geometric cyclic action explicitly on the cores.%---instead, we content ourselves that the cyclic symmetry of the graph of $\mathcal{Y}^k_m$ is inherited from the geometric cyclic symmetry of $\mathcal{Z}^k_m$.

%of $\mathcal{Z}^k_m$ using partition cores.  We restrict 
%, and give their interpretation as a   
%Using Theorem~\ref{thm:lascoux}, there is a bijection between $\mathcal{Y}^k_m$ and   The maximal elements can be shown to be the $R_{k,\{i_1,i_2,\ldots,i_{m-1}\}}$.

We interpret the poset of the alcoves in the dominant chamber as a poset on certain partition cores.

\begin{definition} \rm
The \emph{hook length} of a box in the Ferrers diagram (in English notation) of a partition is one plus the number of boxes to the right in the same row plus the number of boxes below and in the same column.  A \emph{$(k+1)$-core} is a partition with no hook length of size $(k+1)$.
\end{definition}

We label the $(i,j)$th box of the Ferrers diagram of a $(k+1)$-core $\lambda$ by its \emph{content} $(j-i) \mod (k+1)$.  We define an action of the affine symmetric group on $(k+1)$-cores $\lambda$ by letting $s_i \lambda$ (for $0 \leq i \leq k$) be the unique $(k+1)$-core that differs from $\lambda$ only by boxes with content $i$.

\begin{definition} \rm
Define a partial order on the set of $(k+1)$-cores by fixing
the covering relations: $\lambda$ covers $\mu$ if 
$|\lambda|>|\mu|$ and $\lambda = s_i \mu$ for some $i$.  
\end{definition}

\begin{theorem}[~\cite{lascoux2001ordering}]
\label{thm:lascoux}
	There is a poset isomorphism between alcoves in the dominant chamber and the poset of $(k+1)$-cores.
\end{theorem}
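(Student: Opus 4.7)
The plan is to realize both posets as avatars of the weak order on ``affine Grassmannian'' elements of the affine symmetric group $\tilde{S}_{k+1}$, using the simply transitive $\tilde{S}_{k+1}$-action on alcoves (with basepoint the fundamental alcove $A_0$) and the transitive action on $(k+1)$-cores from the excerpt (with basepoint the empty core $\emptyset$). The isomorphism to be constructed is $w \cdot A_0 \mapsto w \cdot \emptyset$, and I would verify it is well-defined, bijective, and preserves covers.

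First, I would show that the alcoves in the dominant chamber are precisely $\{w \cdot A_0 : w \in W^S\}$, where $W^S$ is the set of minimal-length coset representatives for $\tilde{S}_{k+1}/S_{k+1}$ and $S_{k+1} = \langle s_1,\ldots,s_k\rangle$ is the finite parabolic. The alcove $w \cdot A_0$ lies in the dominant chamber iff it lies on the positive side of each $H_{\alpha_i,0}$ for $1 \le i \le k$, which is equivalent to $\ell(s_i w) > \ell(w)$ for $1 \le i \le k$, the defining property of $W^S$. Next, I would use the classical bijection $W^S \leftrightarrow \{(k+1)\text{-cores}\}$ given by $w \mapsto w \cdot \emptyset$; bijectivity follows because the stabilizer of $\emptyset$ under the core action is exactly $S_{k+1}$ (each $s_i$ with $i \ge 1$ fixes $\emptyset$, since $\emptyset$ has no addable or removable corner of nonzero content). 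Composing gives a well-defined bijection.

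To match covering relations, I would observe that both posets are graded by $\ell(w)$: on the alcove side this is the minimum gallery distance from $A_0$ to $w \cdot A_0$, and on the core side it equals $|w \cdot \emptyset|$, since each simple reflection in a reduced expression for $w \in W^S$ adds a nonempty ribbon of some content to the growing core. A covering in the alcove poset---two alcoves sharing a wall, with direction determined by distance from $A_0$---translates, via the Coxeter-complex labeling of walls by simple reflections, into left-multiplication by some $s_i$ with $\ell(s_i w') = \ell(w') + 1$. This is precisely the core-side covering relation $w \cdot \emptyset = s_i (w' \cdot \emptyset)$ with $|w \cdot \emptyset| > |w' \cdot \emptyset|$.

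The main obstacle I expect is making the wall-labeling rigorous: although it is geometrically natural that a wall of $w \cdot A_0$ inherits a simple-reflection label by propagation from the corresponding wall of $A_0$, one must check that the label matches the index $i$ of the core action $s_i$ on $w \cdot \emptyset$. This compatibility is the heart of the theorem and is established in~\cite{lascoux2001ordering}; once it is in hand, the map $w \cdot A_0 \mapsto w \cdot \emptyset$ is immediately seen to be a poset isomorphism.
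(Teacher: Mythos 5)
Your proposal is correct and follows the standard argument that the paper itself only gestures at: the paper cites \cite{lascoux2001ordering} for this theorem and merely remarks that the affine symmetric group action on cores parallels the reflection of an alcove across its bounding hyperplanes, which is exactly the content of your wall-labeling step. The one bookkeeping point to watch is the left/right convention: with a left action on alcoves, reflecting $w\cdot A_0$ across one of its own walls yields $ws_j\cdot A_0$ (right multiplication), so to make covers in the dominant-chamber poset correspond to \emph{left} multiplication by $s_i$ --- matching the core action $\lambda\mapsto s_i\lambda$ --- you should parametrize dominant alcoves as $w^{-1}\cdot A_0$ for $w$ a minimal-length coset representative, or equivalently interchange the roles of left and right cosets relative to what you wrote.
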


One may compute this bijection by observing that the action of the affine symmetric group on the cores parallels the action of reflecting an alcove across one of its bounding hyperplanes.

\begin{figure}[ht]
\begin{center}
\includegraphics[height=3.3in]{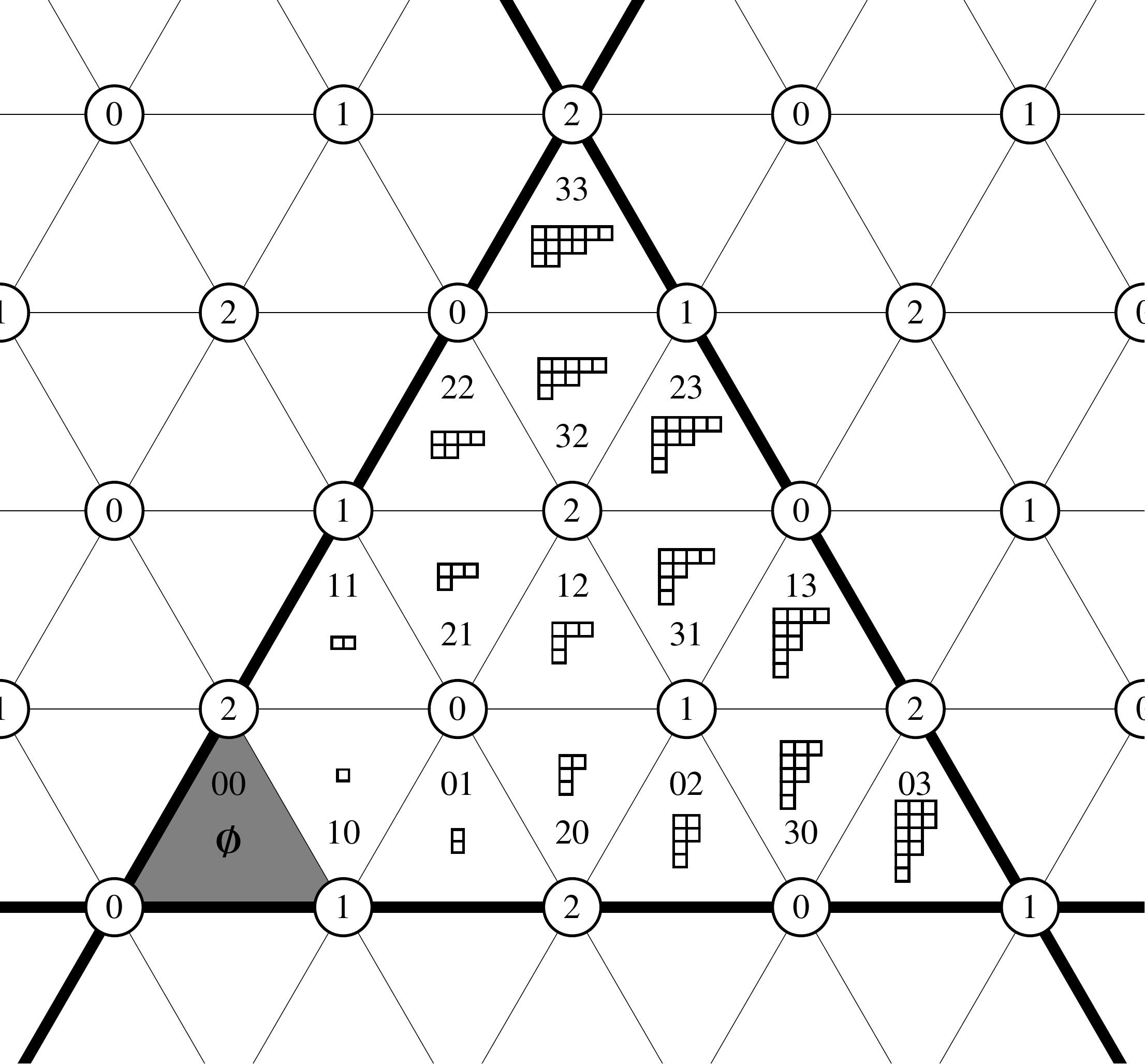}
\end{center}
\caption{The fundamental alcove is shaded gray.  The thick black lines represent the hyperplanes $H_{\alpha_1,0}$, $H_{\alpha_2,0}$, and $H_{-\alpha_0,4}$.  The alcoves within this bounded region are filled with their corresponding cores in $\mathcal{Y}_4^2$ and words in $\mathcal{X}_4^2$.}
\label{fig:geometry}
\end{figure}

Using Theorem~\ref{thm:lascoux}, C.\ Berg and M.\ Zabrocki specified the maximal elements of $\mathcal{Z}^k_m$ in terms of $(k+1)$-cores~\cite{berg3610expansion}.

\begin{definition} \rm
Let $R_{k,i} = (i^{k-i+1})$ be the rectangular partition with $(k-i+1)$ parts of size $i$.  For fixed $k,m$ and for $i_1 \geq i_2 \geq \cdots \geq i_{m-1}$, let $R_{k,\{i_1,i_2,\ldots,i_{m-1}\}}$ be the Ferrers diagram obtained by placing the Ferrers diagram of $R_{k,i_{j+1}}$ at the lower left corner of the Ferrers diagram of $R_{k,i_{j}}$.
\end{definition}

\begin{example}
For $m=4$ and $k=2$, we have

\ytableausetup{boxsize=0.5em}
$$R_{\{1,1,1\}}=\ydiagram[*(white)]
{2+1,2+1,1+1,1+1,1,1}
*[*(gray)]{3,3,2,2,1,1},
R_{\{2,1,1\}}=\ydiagram[*(white)]
{2+2,1+1,1+1,1,1}
*[*(gray)]{4,2,2,1,1},
R_{\{2,2,1\}}=\ydiagram[*(white)]
{3+2,1+2,1,1}
*[*(gray)]{5,3,1,1},
R_{\{2,2,2\}}=\ydiagram[*(white)]
{4+2,2+2,2}
*[*(gray)]{6,4,2}.$$
\end{example}

%C.\ Berg and M.\ Zabrocki were 

%led to these definitions by their earlier work on $k$-Schur functions~\cite{berg2011expansions}---specifically, the multiplication rule that arises when one of the $k$-Schur functions is a rectangle.

\begin{definition} \rm
Let $\mathcal{Y}^k_m$ be the subposet of $k$-Young's lattice consisting of all $(k+1)$-cores contained in some $R_{k,\{i_1,i_2,\ldots,i_{m-1}\}}$.
\end{definition}

%Case $m=2$, \cite{suter2002young}; cases $m>2$, 
\begin{theorem}[\cite{berg2011symmetries}]
There is a poset isomorphism between $\mathcal{Y}^k_m$ and $\mathcal{Z}^k_m$
\end{theorem}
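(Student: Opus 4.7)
The plan is to leverage Theorem~\ref{thm:lascoux}, which already provides a poset isomorphism $\Phi$ between the full poset of alcoves in the dominant chamber and the full poset of $(k+1)$-cores. Since $\mathcal{Z}^k_m$ is the order ideal in the alcove poset consisting of those alcoves lying in the region $\langle -\alpha_0, x \rangle \le m$, it suffices to check that $\Phi(\mathcal{Z}^k_m) = \mathcal{Y}^k_m$ as sets. Both sides are order ideals determined by their maximal elements, so the theorem reduces to matching maxima under $\Phi$.

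Geometrically, the maximal alcoves of $\mathcal{Z}^k_m$ are precisely those sharing a facet with the hyperplane $H_{-\alpha_0, m}$, and I would parametrize them by weakly decreasing sequences $k \ge i_1 \ge i_2 \ge \cdots \ge i_{m-1} \ge 1$. Intuitively, each minimal gallery from the fundamental alcove to such a maximal alcove crosses each horizontal hyperplane $H_{-\alpha_0, j}$ exactly once, and the type $i_j$ of the first wall it hits after entering the $j$-th slab supplies the parameter. I would then show by induction on $m$ that $\Phi$ sends the maximal alcove indexed by $(i_1, \ldots, i_{m-1})$ to the stacked-rectangle core $R_{k, \{i_1, \ldots, i_{m-1}\}}$: on the combinatorial side, passing from the $(m-1)$-dilation into the $m$-dilation and traversing the final slab corresponds to applying a particular reduced word in the affine symmetric group, whose action through successive $s_i$'s glues a new rectangle $R_{k, i_{m-1}}$ onto the lower-left corner of the previously constructed core.

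The main obstacle is the combinatorial verification at the inductive stage: one must check that applying this reduced word to a stacked-rectangle core of $\mathcal{Y}^k_{m-1}$ really does add a full new copy of $R_{k, i_{m-1}}$ of boxes at the lower-left, and that the resulting shape remains a $(k+1)$-core. I plan to do this by tracking the residues of the newly added boxes along the rim and performing a direct hook-length computation, verifying that no hook of length $k+1$ arises and that the resulting partition agrees exactly with the stacking prescription in the definition of $R_{k, \{i_1, \ldots, i_{m-1}\}}$. Once the maxima are matched up under $\Phi$, restricting $\Phi$ to $\mathcal{Z}^k_m$ yields the poset isomorphism onto $\mathcal{Y}^k_m$ for free.
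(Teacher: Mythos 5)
Your proposal follows essentially the same route as the paper's own (sketched) proof: restrict the isomorphism of Theorem~\ref{thm:lascoux} between alcoves in the dominant chamber and $(k+1)$-cores to the order ideal $\mathcal{Z}^k_m$, and reduce everything to identifying the maximal alcoves with the stacked-rectangle cores $R_{k,\{i_1,\ldots,i_{m-1}\}}$. The paper leaves that identification to the cited work of Berg and Zabrocki, whereas you propose to verify it directly by induction on $m$ via reduced words and residues; this is a legitimate way to fill in the same outline rather than a different argument.
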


Figure~\ref{fig:y24} illustrates the poset $\mathcal{Y}^2_4$ and deforms the graph underlying the poset to reveal the cyclic symmetry of order $3$.

\begin{proof}[Proof sketch]
One can identify the maximal alcoves as those labeled by some $R_{k,\{i_1,i_2,\ldots,i_{m-1}\}}$.  Using Theorem~\ref{thm:lascoux}, we obtain a poset isomorphism between $\mathcal{Y}^k_m$ and $\mathcal{Z}^k_m$.
%To complete the proof, we note that 
\end{proof}

The graph of $\mathcal{Y}^k_m$ therefore inherits a $(k+1)$-fold cyclic symmetry from $\mathcal{Z}^k_m$.  Figure~\ref{fig:geometry} illustrates this geometry.

%talk about Suter? explicit
%For $m=2$, R.\ Suter gave an explicit action on partitions.% act on $\lambda=(\lambda_1,\ldots,\lambda_k)$ by removing the top row of the diagram and prepending a column of height $n-\mu_1-1$ on the left.  It isn't hard to show that this has order $n+1$
%example

\vspace{15pt}

%------------------------------------------------------------------------------------------------------------------------
\section{$\mathcal{X}_m^k$: A poset of words of length $k$}%X_m^k
\label{sec:wordsthatend}
%------------------------------------------------------------------------------------------------------------------------

We give a combinatorial definition of a poset $\mathcal{X}^k_m$ on words, show that this poset is isomorphic to $\mathcal{Y}_m^k$, and realize the symmetry of both posets as an explicit combinatorial action on the words in $\mathcal{X}^k_m$.

\begin{definition} \rm
Let $\mathcal{X}^k_m$ be the poset of words $\mathbf{x}$ of length $k$ on $\mathbb{Z}/m\mathbb{Z}$ with the partial order induced by the following covering relations:
\begin{enumerate}
	\item For $a<m-1$, $\mathbf{y} a \lessdot (a+1) \mathbf{y}$, where 
$\mathbf y$ is a string of length $k-1$ on the alphabet 
$\mathbb{Z}/m\mathbb Z$.  
	\item For $b<a$, $\mathbf{y} ab \mathbf{z} \lessdot \mathbf{y} ba \mathbf{z}$, where $\mathbf y$ and $\mathbf z$ are two strings on the 
alphabet $\mathbb Z/m\mathbb Z$, with the total length of the two strings
being $k-1$.   
\end{enumerate}
\end{definition}

Figure~\ref{fig:x24} illustrates the poset $\mathcal{X}^2_4$.
Given a word $\mathbf{x}$ on $\mathbb{Z}/m\mathbb{Z}$, we write $(\mathbf{x}-i)$ to denote the word obtained by subtracting $i$ from each letter of $\mathbf{x}$ (mod $m$).

\begin{theorem}
The graph of $\mathcal{X}^k_m$ has $(k+1)$-fold cyclic symmetry.
\end{theorem}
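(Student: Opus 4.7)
My plan is to produce the cyclic symmetry as an explicit combinatorial graph automorphism $\rho$ of the Hasse diagram of $\mathcal{X}_m^k$ (viewed as an undirected graph) of order $(k+1)$. The strategic backbone is the poset isomorphism $\mathcal{X}_m^k \simeq \mathcal{Y}_m^k$ to be established later in this section, combined with $\mathcal{Y}_m^k \simeq \mathcal{Z}_m^k$ from Section~\ref{sec:posety}; through this chain, one can pull back the evident geometric rotation of the dilated fundamental alcove---which cyclically permutes $\alpha_1, \alpha_2, \ldots, \alpha_k, \alpha_0$---to obtain $\rho$ on words in $\mathcal{X}_m^k$.

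Once the chain of isomorphisms is in hand, both that $\rho$ is a graph automorphism and that $\rho^{k+1}=\mathrm{id}$ are immediate: these properties transfer from the geometric rotation of $\mathcal{Z}_m^k$, which has order $(k+1)$ and preserves adjacency of alcoves. Hence the theorem is essentially a corollary of the isomorphism argument. The remaining content is to describe $\rho$ explicitly on words, which is what makes the statement of this theorem more informative than a formal consequence of Section~\ref{sec:posety}, and what will later allow comparison with the rotation action on $\mathcal{W}_m^k$.

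The main obstacle is making $\rho$ transparent at the level of words. Because the isomorphism $\mathcal{X}_m^k \simeq \mathcal{Y}_m^k$ does not align the letter positions of a word with the walls of the alcove in any trivial way, one should expect $\rho$ to intertwine the two kinds of covering relations in $\mathcal{X}_m^k$: the adjacent swap of type~(2) and the wrap-around relation of type~(1). Geometrically, this reflects the fact that the rotational symmetry cyclically permutes the simple affine reflections $s_0, s_1, \ldots, s_k$, only $k$ of which correspond to interior swaps; the remaining one is realized by the wrap-around move. The delicate step is a case analysis, organized by the two cover types and by the boundary values $x_i=0$ and $x_i=m-1$, verifying that each cover of one type is sent by $\rho$ to a cover of either type (possibly with reversed orientation, since we need only a graph automorphism). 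I expect this bookkeeping---particularly tracking how the cover type switches across the rotation---to be the most technical part of the argument.
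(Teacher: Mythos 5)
Your route is genuinely different from the paper's. The paper proves this theorem by directly defining the combinatorial action $\phi$ on words---via the extended word $\overline{\mathbf{x}} = (\mathbf{x})(m-1)(\mathbf{x}-1)(m-2)\ldots(\mathbf{x}-m+1)(0)$ of length $(k+1)m$, rotated so that its leftmost zero becomes its last letter---and then carrying out exactly the case analysis you anticipate: four cases organized by the two cover types and by the location of the leftmost zero, verifying that $\phi$ sends edges to edges (with the orientation of some edges reversed, so $\phi$ is a graph but not a poset automorphism). You instead pull the geometric rotation of $\mathcal{Z}_m^k$ back through the chain $\mathcal{X}_m^k \simeq \mathcal{Y}_m^k \simeq \mathcal{Z}_m^k$. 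For the bare statement that the graph admits a cyclic symmetry of order $(k+1)$, your argument is sound and cheaper: the isomorphism $\mathcal{X}_m^k \simeq \mathcal{Y}_m^k$ of Theorem~\ref{thm:coretocombo} is proved by an abacus argument that nowhere uses the cyclic action, so your forward reference creates no circularity, only a reversal of the paper's order of presentation. What your approach does not deliver---and what the paper's proof of this theorem actually exists to provide---is the explicit word-level action $\phi$: the equivariant bijection $w$ of Theorem~\ref{thm:forward} is defined directly from the extended-word description of $\phi$ (one reads off the first letters of the words in an orbit), so the ``remaining content'' you defer is not optional for the rest of the argument. If you carry out the deferred case analysis you will essentially reproduce the paper's proof; if you do not, you have proved the theorem as literally stated, but you must still define $\phi$ explicitly and identify it with your pulled-back automorphism $\rho$ before the constructions of Section~\ref{sec:wordsthatsum} can go through.
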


\begin{proof}

We will define a cyclic action of order $(k+1)$ that is a graph isomorphism.

\begin{definition} \rm
 Given a word $\mathbf{x} \in \mathcal{X}^k_m$, form the \emph{extended word} of length $(k+1)m$ $$\overline{\mathbf{x}} = (\mathbf{x})(m-1)(\mathbf{x}-1)(m-2)\ldots(\mathbf{x}-m+1)(0).$$   That is, for any $1\leq i \leq (k+1)$, the entries in positions $i, (k+1)+i, 2(k+1)+i, \ldots (m-1)(k+1)+i$ are cyclically decreasing by $1$.  Let $\phi(\overline{\mathbf{x}})$ be defined by cyclically rotating $\overline{\mathbf{x}}$ left so that its leftmost $0$ appears as its rightmost character.  This induces an action $\phi$ on $\mathcal{X}^k_m$ by restricting the resulting word to its first $k$ letters.
\end{definition}
An example is given in Figure~\ref{fig:orbit}.

\begin{figure}[hb]

\begin{center}
\begin{tabular}{|ccc||ccc|} \hline
\multicolumn{3}{|c||}{Orbits of extended words in $\mathcal{X}^4_2$} & \multicolumn{3}{c|}{Orbits of $\mathcal{W}^4_2$}\\ \hline
$\mathbf{00}3 \text{  } 332 \text{  } 221 \text{  } 110$ & $\mathbf{03}3 \text{  } 322 \text{  } 211 \text{  } 100$ & $\mathbf{33}3 \text{  } 222 \text{  } 111 \text{  } 000$ & 003 & 030 & 300 \\ \hline
$\mathbf{10}3 \text{  } 032 \text{  } 321 \text{  } 210$ & $\mathbf{30}3 \text{  } 232 \text{  } 121 \text{  } 010$ & $\mathbf{32}3 \text{  } 212 \text{  } 101 \text{  } 030$ & 133 & 331 & 313 \\ \hline
$\mathbf{01}3 \text{  } 302 \text{  } 231 \text{  } 120$ & $\mathbf{13}3 \text{  } 022 \text{  } 311 \text{  } 200$ & $\mathbf{22}3 \text{  } 112 \text{  } 001 \text{  } 330$ & 012 & 120 & 201 \\ \hline
$\mathbf{11}3 \text{  } 002 \text{  } 331 \text{  } 220$ & $\mathbf{02}3 \text{  } 312 \text{  } 201 \text{  } 130$ & $\mathbf{23}3 \text{  } 122 \text{  } 011 \text{  } 300$ & 102 & 021 & 210 \\ \hline
$\mathbf{21}3 \text{  } 102 \text{  } 031 \text{  } 320$ & $\mathbf{20}3 \text{  } 132 \text{  } 021 \text{  } 310$ & $\mathbf{31}3 \text{  } 202 \text{  } 131 \text{  } 020$ & 223 & 232 & 322 \\ \hline
 & $\mathbf{12}3 \text{  } 012 \text{  } 301 \text{  } 230$ & & & 111 & \\ \hline
\end{tabular}
\end{center}
%The corresponding orbits of $\mathbf{x}$.
%\begin{tabular}{ccc}
%$00$ & $03$ & $33$ \\
%$10$ & $30$ & $32$ \\
%$01$ & $13$ & $22$ \\
%$11$ & $02$ & $23$ \\
%$21$ & $20$ & $31$ \\
%$12$ & & \\
%\end{tabular}
\caption{When $m=4$ and $k=2$, there are five orbits of size 3 and one orbit of size 1.}  %The rows of the figure are the words in the orbit of $\overline{20}$, with the corresponding words in $\mathcal{X}_4^2$ in bold.}
\label{fig:orbit}
\end{figure}

Observe that $\phi$ is a cyclic action of order $(k+1)$, since there are $(k+1)$ zeros in $\overline{\mathbf{x}}$.  It is now a tedious check to show that $\phi$ takes edges to edges.  Note that $\phi$ is not a poset isomorphism: it reverses the orientation of some edges.  
%We now show that $\phi$ takes edges to edges by analyzing four cases.

\vspace{20pt}

$\bullet$ \textbf{Case 1 (a)} (An edge of Type (1), where the position of the leftmost zero in $\overline{\mathbf{y} a}$ is not $j(k+1)+k$):
%$a-j$ is not the leftmost zero in $\overline{\mathbf{y}a}$ for some $j\geq 0$):

Let the position $j(k+1)+i$ be the leftmost zero in $\overline{\mathbf{y}a}$, so that $y_{i}-j=0$.% be the leftmost zero. 

Applying $\phi$ to $\mathbf{y_L} y_{i} \mathbf{y_R} a \lessdot (a+1)\mathbf{y_L} y_{i} \mathbf{y_R}$ gives us $$(\mathbf{y_R}-j) (a-j) (m-1-j) (\mathbf{y_L}-j-1) \gtrdot (\mathbf{y_R}-j) (m-1-j)(a-j) (\mathbf{y_L}-j-1),$$

%$$\mathbf{y} a \lessdot (a+1) \mathbf{y}$$ gives us
%\begin{align*}
%\mathbf{y_L} y_{i+1} \mathbf{y_R} a (m-1) \ldots & (\mathbf{y_L}-j)0 (\mathbf{y_R}-j) (a-j)(m-1-j) \ldots \\
%\lessdot (a+1) \mathbf{y_L} y_{i+1} \mathbf{y_R} (m-1) \ldots & (a+1-j) (\mathbf{y_L}-j)0(\mathbf{y_R}-j) (m-1-j) \ldots.
%\end{align*}

%The action $\phi$ gives us
%\begin{align*}
%& (\mathbf{y_R}-j) (a-j) (m-1-j) (\mathbf{y_L}-j-1)\\
%\gtrdot & (\mathbf{y_R}-j) (m-1-j)(a-j) (\mathbf{y_L}-j-1),
%\end{align*}
which is an edge of Type 2.

$\bullet$ \textbf{Case 1 (b)} (An edge of Type (1), where the position of the leftmost zero in $\overline{\mathbf{y} a}$ is $j(k+1)+k$):
%$a-j$ is the leftmost zero in $\overline{\mathbf{y}a}$ for some $j\geq 0$):

%If the subword $\mathbf{y}$ in the relation $$\mathbf{y} 0 \lessdot 1 \mathbf{y}$$ contains a 0, then the analysis from Case 1 (a) holds.  Otherwise
Let the position $j(k+1)+k$ be the leftmost zero in $\overline{\mathbf{y}a}$, so that $a-j=0$.

Applying $\phi$ to $\mathbf{y} a \lessdot (a+1) \mathbf{y}$ gives us $$(m-1-j)(\mathbf{y}-1-j) \gtrdot (\mathbf{y}-1-j) (m-2-j),$$ which is an edge of Type 1.

\vspace{20pt}

For an edge of Type (2), let the positions of $a$ and $b$ in $\mathbf{y} ab \mathbf{z}$ be given by $i$ and $i+1$.

$\bullet$  \textbf{Case 2 (a)} (An edge of Type (2), where the position of the leftmost zero in $\overline{\mathbf{y} ab \mathbf{z}}$ is not $a(k+1)+i$ or $b(k+1)+i+1$):

It is clear that the relation $$\mathbf{y}ab\mathbf{z} \lessdot \mathbf{y}ba\mathbf{z}$$ will translate to another edge of Type (2).

$\bullet$ \textbf{Case 2 (b)} (An edge of Type (2), where the position of the leftmost zero in $\overline{\mathbf{y} ab \mathbf{z}}$ is $a(k+1)+i$ or $b(k+1)+i+1$):

If the leftmost zero is in position $a(k+1)+i$, applying $\phi$ to $\mathbf{y_L} a b \mathbf{y_R} \lessdot \mathbf{y_L} b a \mathbf{y_R}$ gives us $$(b-a) (\mathbf{y_R}-a) (\mathbf{y_L}-a-1) \gtrdot (\mathbf{y_R}-a) (\mathbf{y_L}-a-1) (b-a-1),$$ which is an edge of Type (1).

If the leftmost zero is instead in position $b(k+1)+i+1$, applying $\phi$ to $\mathbf{y_L} a b \mathbf{y_R} \lessdot \mathbf{y_L} b a \mathbf{y_R}$ gives us $$(\mathbf{y_R}-b) (\mathbf{y_L}-b-1) (a-b-1) \lessdot (a-b)(\mathbf{y_R}-b) (\mathbf{y_L}-b-1),$$ which is again an edge of Type (1).

%We have the relation $$\mathbf{y}a0\mathbf{z} \lessdot \mathbf{y}0a\mathbf{z}.$$  If the subword $\mathbf{y}$ contains a zero, $\phi$ gives us another edge of Type 2.  Otherwise, $\phi$ gives us the relation $$\mathbf{z}m(\mathbf{y}-1)(a-1) \lessdot a\mathbf{z}m(\mathbf{y}-1),$$ which is an edge of Type 1.

%\textbf{Case 2 (a)} (An edge of Type (2), with $b\neq 0$):

%It is clear that the relation $$\mathbf{y}ab\mathbf{z} \lessdot \mathbf{y}ba\mathbf{z}$$ will translate to another edge of Type 2.

%\textbf{Case 2 (b)} (An edge of Type (2), with $b=0$):

%We have the relation $$\mathbf{y}a0\mathbf{z} \lessdot \mathbf{y}0a\mathbf{z}.$$  If the subword $\mathbf{y}$ contains a zero, $\phi$ gives us another edge of Type 2.  Otherwise, $\phi$ gives us the relation $$\mathbf{z}m(\mathbf{y}-1)(a-1) \lessdot a\mathbf{z}m(\mathbf{y}-1),$$ which is an edge of Type 1.
\end{proof}

\begin{theorem}[Case $m=2$, \cite{williamsbijactions}]
\label{thm:coretocombo}
There is a poset isomorphism between $\mathcal{X}_m^k$ and $\mathcal{Y}_m^k$.  The geometric symmetry of the underlying graph is realized by the cyclic action $\phi$.
\end{theorem}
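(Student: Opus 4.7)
The plan is to first exhibit an explicit bijection $\Psi\colon\mathcal{X}^k_m\to\mathcal{Y}^k_m$, then verify it respects the covering relations, and finally identify the combinatorial action $\phi$ with the geometric cyclic symmetry transported from $\mathcal{Z}^k_m$ via Theorem \ref{thm:lascoux} and \cite{berg2011symmetries}.

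For the bijection, I would encode each $(k+1)$-core $\mu \in \mathcal{Y}^k_m$ by its $(k+1)$-runner abacus, normalized so that the bead displacements sum to zero, and read off the first $k$ coordinates of the displacement vector modulo $m$ as a word in $(\mathbb{Z}/m\mathbb{Z})^k$. Since both $|\mathcal{X}^k_m|$ and $|\mathcal{Y}^k_m|$ equal $m^k$, only injectivity must be checked; the bound coming from the maximal rectangle-stacks $R_{k,\{i_1,\ldots,i_{m-1}\}}$ is exactly what keeps the displacements within a range of size $m$ on each runner. For the poset part, a cover $\mu\lessdot s_i\mu$ in $\mathcal{Y}^k_m$ corresponds to sliding beads from runner $i$ to runner $i+1\pmod{k+1}$. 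A direct case analysis, split according to whether $i$ is the wrap-around index or an interior index, shows that this bead-sliding translates into exactly one of the two word covers: the wrap-around produces a Type (1) relation $\mathbf{y}a\lessdot(a+1)\mathbf{y}$, while the interior reflections produce Type (2) relations $\mathbf{y}ab\mathbf{z}\lessdot\mathbf{y}ba\mathbf{z}$.

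To identify $\phi$ with the geometric action, I would exploit that the geometric $(k+1)$-fold symmetry cyclically permutes the simple affine roots $\alpha_0,\alpha_1,\dots,\alpha_k$, and hence cyclically permutes the runners of the abacus. Under $\Psi$ this is a cyclic shift of the displacement vector, which the extended-word construction of $\phi$ is designed to model: the $(k+1)$ zeros of $\overline{\mathbf{x}}$ mark the runner boundaries, and moving past the leftmost zero advances to the next runner. It then suffices to verify agreement on a single orbit---for instance the orbit of the fundamental alcove, or an orbit of maximal rectangle-stack cores---after which the graph-automorphism property of $\phi$ established in the previous theorem forces agreement everywhere.

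The main obstacle is this last identification, since, as the excerpt emphasizes, no simple combinatorial rule for the geometric action on cores is available. My fallback is a rigidity argument: using the distinguished labels of the $(k+1)$ atoms above the minimal element together with the known classification of the maximal elements as stacks of rectangles, one can show the graph of $\mathcal{Y}^k_m$ admits at most one $(k+1)$-fold graph automorphism that restricts to the expected permutation on these canonical features, forcing $\phi$ to coincide with the geometric action.
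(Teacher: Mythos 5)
Your bijection and your analysis of the covering relations are essentially the paper's proof. The paper also encodes a core by its $(k+1)$-runner abacus (normalized by forcing the first zero of the boundary word into the leftmost column), reads off the word $x_1\cdots x_k$ where $x_i$ counts the ones on runner $i$, uses the containment in the rectangle-stacks $R_{k,\{i_1,\ldots,i_{m-1}\}}$ to bound the entries by $m-1$, and then observes that a cover $\mu\lessdot s_i\mu$ moves ones from one runner to the next, with the wrap-around case (a box added in the first row) giving a Type (1) cover and the interior case giving Type (2). Your ``balanced displacement'' normalization is a cosmetic variant of this, and your counting argument for bijectivity ($m^k$ on both sides plus injectivity) is the same one the paper uses implicitly.

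The genuine gap is in your third step. Verifying that $\phi$ and the geometric automorphism agree on a single orbit and then invoking ``the graph-automorphism property'' does not force agreement everywhere: the underlying graph of $\mathcal{Y}^k_m$ has automorphisms beyond the cyclic group (the simplex picture already suggests reflections), so two order-$(k+1)$ automorphisms can agree on an orbit without being equal; you would need to show that an automorphism fixing that orbit pointwise is the identity, which is exactly the rigidity statement you relegate to a ``fallback'' and do not prove. Your canonical-features version of the rigidity argument is the right idea but is only sketched: you must actually show that the atoms and the maximal rectangle-stack cores, together with adjacency, determine every vertex. To be fair, the paper's own displayed proof is also silent on this second sentence of the theorem (it proves only the poset isomorphism, deferring the $m=2$ identification to Suter's description and to \cite{williamsbijactions}), so you are attempting more than the paper writes down; but as a proof of the full statement, this step is not yet complete. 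A cleaner route, closer in spirit to the rest of the paper, is to track the geometric rotation directly through the abacus: the rotation permutes the affine simple roots, hence the runners, and after re-normalizing the abacus representative one recovers precisely the ``rotate the extended word to the leftmost zero'' rule defining $\phi$; this computation replaces the rigidity argument entirely.
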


To efficiently define this isomorphism, we first recall the abacus model.

\begin{definition} \rm
Given a partition, we read off the path formed by the boundary of its Ferrers diagram from top right to bottom left as a \emph{boundary word}, where a $1$ records a step left and a $0$ records a step down.

If we are further given a positive integer $(k+1)$, we may form an \emph{abacus display} with $(k+1)$ runners (labeled $0,1,\ldots,k$) by breaking the boundary word into consecutive runs of length $(k+1)$ and then stacking them.  Given a $(k+1)$-core contained in $R_{k,\{i_1,i_2,\ldots,i_{m-1}\}}$, we will choose the particular representative of it as an abacus display with $(k+1)$ runners by forcing the first zero of the boundary word to lie in the leftmost column.
\end{definition}

%The advantage of this display is that one may immediately see which partitions are $(k+1)$-cores---they are exactly those with columns flush to the top.

%example

The first three rows of Figure~\ref{fig:posetiso} illustrate the construction of the abacus representative from a partition.  We now proceed with the proof of Theorem~\ref{thm:coretocombo}.

\begin{proof}

%\begin{enumerate}
%	\item Take the finite part of the binary word between the first $0$ and the last $1$,
%	\item Pad the boundary word with zeros on the right to make it of length $(m-1)(k+1)$,
%	\item Break the result into $(m-1)$ consecutive runs, each of length $(k+1)$.
%\end{enumerate}
Since the partition is a core, the columns will be flush~\cite{james1984representation}. %[FINISH]: Reference
  We may therefore recover this display from the word $x_1 \cdots x_k$, where $x_i$ counts the number of ones in the $i$th column (occurring after the first zero).
%\begin{enumerate}
%	\item Add these smaller words together component-wise, and
%	\item Suppress the leading zero.
%\end{enumerate}
Finally, note that since the core was contained in $R_{k,\{i_1,i_2,\ldots,i_{m-1}\}}$, it will have at most $m-1$ rows.  We have therefore defined a bijection from $\mathcal Y^k_m$ to words of length $k$ on $\mathbb{Z}/m\mathbb{Z}$.

To complete the proof, we show that the poset structures are the same.  The empty partition is mapped to the word of all zeros.  Adding a box to the Ferrers diagram of a general partition changes a consecutive pair $\ldots 10 \ldots$ in the boundary word to the pair $\ldots 01 \ldots$.  Adding all possible boxes with a specific content to a core simultaneously applies this change to all such $\ldots10\ldots$ pairs in the boundary word that lie $(k+1)$ positions apart.  Converting to the abacus model stacks entries that differ by $(k+1)$ positions.

If we do not add any boxes to the first row, then the $\ldots10\ldots$ pairs are not split between the first and last column.  A covering relation of Type 2 in $\mathcal{Y}_m^k$ therefore corresponds to adding as many boxes as possible with the same content when no boxes are added to the first row.
%In this case, we will have to reorient our abacus model so that the condition that the first column contains only zeroes is satisfied.

On the other hand, adding boxes including one on the first row corresponds to the $\ldots10\ldots$ pairs being split between the first and last column.  In this case, the last column will be emptied of all its ones and the additional one directly before the first zero will be introduced.  This corresponds to an edge of Type 1.

%so that we can just cycle the abacus model over by one

%  To see that  while an edge of Type 1 corresponds to such a move when boxes are added to the first row.%, while preserving the property that the partition is a $(k+1)$-core.
\end{proof}

When $m=4$ and $k=2$, this bijection is illustrated for a single orbit in Figure~\ref{fig:posetiso}.  The full correspondence for this example is given in Figure~\ref{fig:geometry}.

%[FINISH]: insert example of this
\begin{figure}[ht]
\begin{tabular}{|c|c|c|c|}
\hline
3-core in $\mathcal{Y}_4^2$ & $\ydiagram{3,1}$ & $\ydiagram{2,1,1}$ & $\ydiagram{4,2,1,1}$  \\ \hline
Boundary word & $011|010|000$ & $010|010|000$ & $011|010|010$  \\ \hline
Abacus display & $\begin{gathered}\begin{matrix}
  0 & 1 & 1 \\
  0 & 1 & 0 \\
  0 & 0 & 0
 \end{matrix}\end{gathered}$ & $\begin{gathered}\begin{matrix} 0 & 1 & 0 \\ 0 & 1 & 0 \\ 0 & 0 & 0\end{matrix}\end{gathered}$ & $\begin{gathered}\begin{matrix}
  0 & 1 & 1 \\
  0 & 1 & 0 \\
  0 & 1 & 0
 \end{matrix}\end{gathered}$ \\ \hline
Word in $\mathcal{X}_4^2$ & $21$ & $20$ & $31$ \\ \hline
\end{tabular}
\caption{The poset isomorphism given in Theorem~\ref{thm:coretocombo} from an orbit of $\mathcal{Y}_4^2$ to the corresponding orbit of $\mathcal{X}_4^2$.}
\label{fig:posetiso}
\end{figure}

\begin{remark} \rm
In the case $m=2$, R.\ Suter gave an explicit description of the action on cores~\cite{suter2002young}.  It was proved in~\cite{williamsbijactions} that this is the same as $\phi$.
\end{remark}

\vspace{15pt}
%------------------------------------------------------------------------------------------------------------------------
\section{$\mathcal{W}_m^k$: Words of length $(k+1)$ that sum to $(m-1)$}
\label{sec:wordsthatsum}
%------------------------------------------------------------------------------------------------------------------------

We define the cyclic sieving phenomenon, give a set of words $\mathcal{W}_m^k$ under rotation that exhibit it, and give the forward direction of an equivariant bijection from $\mathcal{X}_m^k$ to $\mathcal{W}_m^k$.  This equivariant bijection exchanges the natural poset structure on $\mathcal{X}_m^k$ for a natural cyclic action on $\mathcal{W}_m^k$.

The cyclic sieving phenomenon (CSP) was introduced by V.\ Reiner, D.\ Stanton, and D.\ White~\cite{reiner2004cyclic} as a generalization of J.\ Stembridge's $q=-1$ phenomenon~\cite{stembridge1994some}.%FINISH REFERENCE

\begin{definition}[\cite{reiner2004cyclic}] \rm
	Let $X$ be a finite set, $X(q)$ a generating function for $X$, and $C$ a cyclic group acting on $X$.  Then the triple $(X, X(q), C)$ exhibits the CSP if for $c \in C,$ $$X(\omega(c)) = \left| \{ x \in X : c(x) = x \} \right|,$$ where $\omega: C \to \mathbb{C}$ is an isomorphism of $C$ with the $n$th roots of unity. 
\end{definition}

In other words, a set exhibits the CSP if we can obtain information about its orbit structure under a cyclic action by evaluating a polynomial at a root of unity.

\begin{definition} \rm
Let $\mathcal{W}_{m}^{k}$ be the set of all words on $\mathbb{Z}/m\mathbb{Z}$ of length $(k+1)$ with sum equal to $(m-1) \pmod m$.  Let $$\mathcal{W}_{m}^{k}(q) = \prod_{i=1}^{k} \frac{1-q^{m i}}{1-q^i}$$ be a generating function for $\mathcal{W}_{m}^{k}$.
\end{definition}

We let $C_{k+1}$ act by left rotation, sending $w_1 w_2 \ldots w_{k+1}$ to $w_2 \ldots w_{k+1} w_{1}$.  The following lemma is an easy exercise by direct computation.%, using L'H\^{o}pital's Rule.

\begin{lemma}
$(\mathcal{W}_{m}^{k},\mathcal{W}_{m}^{k}(q),C_{k+1})$ exhibits the CSP.
\end{lemma}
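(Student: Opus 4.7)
The plan is to verify the defining CSP identity for each power $c^j$ of a generator $c \in C_{k+1}$, comparing the fixed-point count with the evaluation $\mathcal{W}_m^k(\zeta^j)$, where $\zeta = e^{2\pi i/(k+1)}$. Fix $j \in \{0,1,\ldots,k\}$, set $d = \gcd(j,k+1)$ and $e = (k+1)/d$, and note that $\omega := \zeta^j$ is a primitive $e$-th root of unity.

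First I would count fixed points. The subgroup generated by $j$ inside $\mathbb{Z}/(k+1)\mathbb{Z}$ partitions the $k+1$ positions into $d$ orbits of size $e$. A word is fixed by rotation by $j$ if and only if it is constant on each such orbit, so it is determined by the common values $v_1,\ldots,v_d \in \mathbb{Z}/m\mathbb{Z}$, and its total sum is $e(v_1+\cdots+v_d) \pmod m$. The congruence $e(v_1+\cdots+v_d) \equiv m-1 \pmod m$ has a solution if and only if $\gcd(e,m)$ divides $m-1$, and since $\gcd(e,m)$ also divides $m$ this forces $\gcd(e,m) = 1$; in that case there are exactly $m^{d-1}$ solutions (choose $v_1,\ldots,v_{d-1}$ freely, then $v_d$ is uniquely determined mod $m$). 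Otherwise, there are no fixed points.

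Next I would evaluate $\mathcal{W}_m^k(\omega) = \prod_{i=1}^k \frac{1-\omega^{mi}}{1-\omega^i}$ in the two corresponding cases. If $g := \gcd(m,e) > 1$, consider the index $i_0 := e/g \in \{1,\ldots,k\}$. Then $\omega^{mi_0} = (\omega^e)^{m/g} = 1$, while $\omega^{i_0}$ has exact order $g > 1$ and so is not $1$; hence the $i_0$-factor has a zero numerator and nonzero denominator, forcing the whole product to vanish, in agreement with the fixed-point count. If instead $\gcd(m,e) = 1$, multiplication by $m$ permutes $\mathbb{Z}/e\mathbb{Z}$, so over the indices $i \in \{1,\ldots,k\}$ with $e \nmid i$ the multisets of residues $\{mi \bmod e\}$ and $\{i \bmod e\}$ coincide (each nonzero residue appearing exactly $d$ times), and the corresponding numerator and denominator factors cancel in pairs as multisets of $e$-th roots of unity. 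The remaining $d-1$ indices $i = e, 2e, \ldots, (d-1)e$ each contribute an indeterminate $0/0$ factor whose L'H\^opital limit is $m$, giving a total of $m^{d-1}$ and matching the fixed-point count.

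The main obstacle will be the multiset bookkeeping in the $\gcd(m,e) = 1$ case: one must carefully verify that, for each nonzero residue $r$ mod $e$, the indices $i \in \{1,\ldots,k\}$ with $i \equiv r \pmod e$ number exactly $d$, and that the induced bijection $r \mapsto mr$ on nonzero residues lets the numerator and denominator factors pair off into equal products. Beyond this, the proof reduces to elementary orbit counting on one side and direct manipulation of roots of unity (with L'H\^opital at the $d-1$ remaining indeterminate factors) on the other.
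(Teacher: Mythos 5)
Your proposal is correct and complete: the fixed-point count via orbits of size $e=(k+1)/\gcd(j,k+1)$ and the root-of-unity evaluation (each factor being the polynomial $1+q^i+\cdots+q^{(m-1)i}$, so the $0/0$ factors resolve to $m$ and a single genuinely zero factor kills the product) both check out, including the multiset pairing in the $\gcd(m,e)=1$ case. The paper gives no argument at all, dismissing the lemma as ``an easy exercise by direct computation,'' and your write-up is exactly that computation carried out in full.
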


To prove the corresponding statement for $\mathcal{X}_m^k$, we define an equivariant bijection with $\mathcal{W}_m^k$.

{
\renewcommand{\thetheorem}{\ref{thm:forward}}
\begin{theorem}
	There is an equivariant bijection $w$ from $\mathcal{X}_{m}^k$ under $\phi$ to $\mathcal{W}_{m}^k$ under left rotation.
\end{theorem}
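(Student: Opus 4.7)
My plan is to define $w$ by reading off specific letters of the extended word $\overline{\mathbf{x}}$, verify equivariance and the sum condition by direct inspection, and then face bijectivity as the main obstacle.

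Viewing $\overline{\mathbf{x}}$ as an $m\times(k+1)$ array whose columns cycle downward by $-1$, each column contains exactly one zero, so there are $k+1$ zeros at positions $1\le p_1<p_2<\cdots<p_{k+1}=m(k+1)$. Setting $p_0:=0$, I define $w(\mathbf{x})_i$ to be the entry of $\overline{\mathbf{x}}$ at position $p_{i-1}+1$ for $i=1,\ldots,k+1$. Equivariance is then immediate: $\phi$ rotates $\overline{\mathbf{x}}$ left by exactly $p_1$, which carries the zero sequence $(p_1,\ldots,p_{k+1})$ to $(p_2-p_1,\ldots,p_{k+1}-p_1,m(k+1))$ and correspondingly cyclically left-shifts the read-off sequence, so $w\circ\phi=L\circ w$, where $L$ is the left rotation on $\mathcal{W}_m^k$.

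For the sum condition, I write each zero $p_j$ (for $1\le j\le k$) as lying in row $a_j$ and column $c_j\in\{1,\ldots,k\}$, so $c_1,\ldots,c_k$ is a permutation of $\{1,\ldots,k\}$ and $a_j=x_{c_j}$. The position $p_{i-1}+1$ is one column to the right of $p_{i-1}$ in the same row, giving $w_i=x_{c_{i-1}+1}-a_{i-1}\pmod m$ for $i\ge 2$ under the convention $x_{k+1}:=m-1$, together with $w_1=x_1$. Summing and using $\{c_j\}=\{1,\ldots,k\}$, the $x$-contributions cancel against the $a_j=x_{c_j}$ terms, leaving $\sum w_i\equiv m-1\pmod m$, so $w(\mathbf{x})\in\mathcal{W}_m^k$.

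The main obstacle is bijectivity. Since $|\mathcal{X}_m^k|=|\mathcal{W}_m^k|=m^k$, it suffices to produce an inverse. However, while $w_1=x_1$ recovers the first letter directly, extracting $x_2,\ldots,x_k$ from $w_2,\ldots,w_{k+1}$ requires simultaneously locating the internal zeros $p_1,\ldots,p_k$ of an unknown extended word, and no obvious closed-form inversion presents itself. Following the outline in the introduction, the inverse is deferred to Sections~\ref{sec:dendro}–\ref{sec:proof}, where it is first constructed in a larger ``dendro'' framework that admits a cleaner definition and then restricted back to $\mathcal{X}_m^k$.
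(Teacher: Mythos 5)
Your proposal is correct and takes essentially the same route as the paper: your map $w$ (reading the entry of $\overline{\mathbf{x}}$ immediately following each zero) is exactly the paper's bijaction that records the first letter of each orbit element under $\phi$, and your column-permutation computation of $\sum_i w_i \equiv m-1 \pmod m$ is the paper's telescoping-sum argument in different notation. Like the paper, you verify equivariance by construction and defer the genuinely hard part---the inverse $w^{-1}$---to the dendrodistinctivity machinery of the later sections, so nothing is missing relative to the paper's own proof of this statement.
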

\addtocounter{theorem}{-1}
}

\begin{proof}
One direction is easy.  We take the first letter of each word from an orbit of $\mathcal{X}_{m}^k$ under $\phi$, and concatenate these letters into a single word, cyclically repeated to make the resulting word of length $(k+1)$. (This trick is called a \emph{bijaction}~\cite{williamsbijactions}.)

We now calculate the sum of the entries in $w(\mathbf x)$.  By definition of the cyclic action, each $x_i$ in the word $\mathbf{x}(m-1)$ will occur as the first letter of a word in the orbit of $\mathbf{x}$ when a translate of the preceding letter $x_{i-1}$ is equal to zero.  Writing $\mathbf{x}(m-1) = \mathbf{x_L} x_{i-1} x_i \mathbf{x_R}$, we form the extended word of length $(k+1)m$ $$\mathbf{x_L} x_{i-1} x_i \mathbf{x_R} \ldots (\mathbf{x_L}-x_{i-1}) 0 (x_i -x_{i-1}) (\mathbf{x_R}-x_{i-1}) \ldots ,$$
%(\mathbf{x_L}-(m-1)) (x_{i-1}-(m-1)) (x_i -(m-1)) (\mathbf{x_R}-(m-1)),$$
from which we conclude that each $x_i$ in $\mathbf{x}(m-1)$ is counted as $(x_i-x_{i-1}) \pmod m$.  Summing over all $i$ (and so disregarding the order in which the letters appear), we have a telescoping sum which leaves only the last letter of $\mathbf{x}(m-1)$.  Therefore, the sum of the word $w(\mathbf{x})$ is $(m-1) \pmod m$.%, which $m-1$.

%The sum is $(m-1) \pmod m$ because a translate of each $x_i$ in the word $\mathbf{x}(m-1)$ occurs as the first letter of a word in the orbit when $x_{i-1}+j=0 \pmod m$ as $(x_i+j) \pmod m$.  So $j=-x_{i-1} \pmod m$, and $x_i$ occurs as $(x_i-x_{i-1}) \pmod m$.  Summing over all $i$, we have a telescoping sum which leaves only the last letter, which is $m-1$.

By construction, $\phi$ maps to left rotation.  The inverse $w^{-1}$ will be constructed over the next two sections.
\end{proof}

This bijection is illustrated in Figure~\ref{fig:orbit}.  As an immediate corollary, we understand the orbit structures of $\mathcal{X}_{m}^k$, $\mathcal{Y}_{m}^k$, and $\mathcal{Z}_{m}^k$ under their cyclic actions.

\begin{corollary}
\label{cor:cspthm}
$(\mathcal{X}_{m}^k \simeq \mathcal{Y}_{m}^k \simeq \mathcal{Z}_{m}^k,\mathcal{W}_{m}^{k}(q),\langle \phi \rangle)$ exhibits the CSP.
\end{corollary}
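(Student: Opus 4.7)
The plan is to chain together the equivariant bijection of Theorem~\ref{thm:forward} with the CSP of the preceding lemma, and then transport everything back along the poset isomorphisms. The key observation that makes this essentially automatic is that the cyclic sieving phenomenon is an invariant of the pair (finite set, cyclic action) together with its generating function: if $f\colon X\to X'$ is a bijection intertwining a generator $c$ of $C$ on $X$ with a generator $c'$ of $C$ on $X'$, then for every power $c^j$ the fixed-point sets $X^{c^j}$ and $(X')^{(c')^j}$ have the same cardinality. Consequently, if $(X',X'(q),C)$ exhibits the CSP, so does $(X,X'(q),C)$.

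First I would invoke Theorem~\ref{thm:forward}, which provides a bijection $w\colon \mathcal{X}_m^k\to\mathcal{W}_m^k$ sending $\phi$ to left rotation, and combine it with the lemma asserting that $(\mathcal{W}_m^k,\mathcal{W}_m^k(q),C_{k+1})$ exhibits the CSP. By the invariance observation just stated, this immediately gives that $(\mathcal{X}_m^k,\mathcal{W}_m^k(q),\langle\phi\rangle)$ exhibits the CSP, where we identify $\langle\phi\rangle$ with $C_{k+1}$ via the fact (already established in the definition of $\phi$) that $\phi$ has order $k+1$.

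Next I would transport this CSP along the two poset isomorphisms proved earlier: Theorem~\ref{thm:coretocombo} identifies $\mathcal{X}_m^k\simeq\mathcal{Y}_m^k$ with the cyclic action $\phi$ corresponding to the geometric symmetry on $\mathcal{Y}_m^k$, and the theorem of Section~\ref{sec:posety} identifies $\mathcal{Y}_m^k\simeq\mathcal{Z}_m^k$ with the rotational symmetry of the dilated fundamental alcove corresponding to the symmetry on $\mathcal{Y}_m^k$. Since these isomorphisms are equivariant by construction, the CSP transfers verbatim to each of the three triples.

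There is essentially no obstacle; the whole statement is a formal consequence of the three results already in place. The only minor bookkeeping is to confirm that the orders of the cyclic groups match up — that the geometric $(k+1)$-fold symmetry on $\mathcal{Z}_m^k$, the combinatorial action $\phi$ on $\mathcal{X}_m^k$, and the left rotation on $\mathcal{W}_m^k$ are all generators of the same abstract cyclic group $C_{k+1}$ — which follows immediately from the observation (noted just after the definition of $\phi$) that $\phi$ has order $k+1$ because $\overline{\mathbf{x}}$ contains exactly $k+1$ zeros.
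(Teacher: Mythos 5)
Your proposal is correct and matches the paper's (implicit) argument: the paper states this as an immediate corollary of the equivariant bijection of Theorem~\ref{thm:forward} together with the lemma that $(\mathcal{W}_m^k,\mathcal{W}_m^k(q),C_{k+1})$ exhibits the CSP, transported along the poset isomorphisms $\mathcal{X}_m^k\simeq\mathcal{Y}_m^k\simeq\mathcal{Z}_m^k$. You have simply spelled out the standard fixed-point-counting argument that the paper leaves unstated.
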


\vspace{15pt}
%------------------------------------------------------------------------------------------------------------------------
\section{Dendrodistinctivity}
\label{sec:dendro}
%------------------------------------------------------------------------------------------------------------------------

We prove that a generalization of the map $w$ in Theorem~\ref{thm:forward} is a bijection.  Section~\ref{sec:proof} will show how to specialize this construction to $w$ and $w^{-1}$.

\begin{definition}
Let $\mathbb{W}_m^{k}$ be the set of words of length $(k+1)$ on $\mathbb{Z}/m\mathbb{Z}$.
\end{definition}

\begin{definition} \rm
Let $\mathbf{w}=w_1 w_2 \ldots w_{k+1} \in \mathbb{W}_m^k$ and let $\mathbf{b}=(b_0 \leq b_1 \leq \cdots \leq b_{m-2})$ be an $(m-1)$-tuple with entries in $\{0,1,2,\ldots,k+1\}$.  Define a \emph{partitioned word} $$(\mathbf{w},\mathbf{b})=w_1 w_2 \ldots w_{b_0}|w_{b_0+1} w_{b_0+2} \ldots w_{b_1}|\cdots|w_{b_{m-2}+1} w_{b_{m-2}+2} \ldots w_{k+1}$$ to be a partition of $\mathbf{w}$ into $m$ connected blocks, where $\mathbf{b}$ specifies where the dividers are placed.  We denote the set of all partitioned words on $\mathbb{Z}/m\mathbb{Z}$ with $\mathbf{w}$ of length $k+1$ by $(\mathbb{W}_m^k)^*$.
\end{definition}

Our map is defined in two parts.  Algorithm~\ref{map:forward} defines a map $p: \mathbb{W}_m^k \to (\mathbb{W}_m^k)^*$.  Write $\mathbb I_m^k$ for the image
of $p$ in $(\mathbb{W}_m^k)^*$.  
Let $f: (\mathbb{W}_m^k)^* \to \mathbb{W}_m^k$ be the map on partitioned words
which forgets the partition.  Composing, we obtain a map $f \circ p$ from $\mathbb{W}_m^k$ to itself.

The inverse map $(f \circ p)^{-1}$ is also defined in two parts.  Algorithm~\ref{map:inverse} defines a map $q$ which takes 
$\mathbb{I}_m^k$ to $\mathbb W_m^k$.  This map $q$ is the inverse of $p$.
We also define a map 
$g: \mathbb W_m^k \to \mathbb I_m^k$ which is the inverse of $f$: it reconstructs
the partition forgotten by $f$.  

$$\xymatrix{
 \mathbb{W}_m^k \ar@<.7ex> [r]^{p} & \mathbb{I}_m^k \ar@<.7ex>[rrrr]^{f=\text{Forget partition}} \ar@<.7ex> [l]^{q} & & & & \ar@<.7ex>[llll]^{g=\text{Reconstruct partition}} \mathbb{W}_m^k 
% \\
% \mathcal{T}_m^*
}$$

%\vspace{20pt}

%develop a theory of partitioned words and show that a certain class of partitionings are uniquely determined by the underlying words.

% $\mathbf{b}_{\sigma+1},\mathbf{b}_{\sigma+2}, \ldots, \mathbf{b}_\sigma$ with rightmost positions of the blocks indexed by $0\leq b_1\leq b_2 \leq \ldots \leq b_{m}=k+1$.  We denote the set of all partitioned words on $\mathbb{Z}/m\mathbb{Z}$ of length $k+1$ by $(\mathbb{W}_m^k)^*$.

%Given a partitioned word $\mathbf{b(w)}$ and an index $i$, we find the position of the block to which the letter $w_i$ belongs by defining $\block(\mathbf{b(w)},i)=t$, where $b_{t-1}+1 \leq i \leq b_t$ (and we take $b_0=0$).
	%=
	%(\mathbf{b}_{\sigma-m+1},\mathbf{b}_{\sigma-m+2}, \ldots, \mathbf{b}_\sigma)

Before we give the definitions of $p$ and $q$, we need some additional notation for partitioned words.

\begin{definition} \rm
Given a partitioned word $(\mathbf{w},\mathbf{b})$, let $\sigma = \sum_{i=1}^{k+1} w_i \pmod m$.  We label the blocks from left to right as $(\mathbf{w},\mathbf{b})_{\sigma+1},(\mathbf{w},\mathbf{b})_{\sigma+2}, \ldots, (\mathbf{w},\mathbf{b})_\sigma$.
\end{definition}

Algorithm~\ref{map:forward} now defines $p$.

%We define $\mathbb I_m^k$ to be the image of $p$ in $(\mathbb{W}_m^k)^*$.  

\begin{algorithm}[ht]
\KwIn{A word $\mathbf{x} = x_1 x_2 \ldots x_{k+1}$.}
\KwOut{A partitioned word $(\mathbf{w},\mathbf{b})$ with sum $\sigma = x_{k+1} \pmod m$.}
$x_0:=0$\;
%$t_0:=0$\;
Initialize $(\mathbf{w},\mathbf{b})$ as $m$ empty blocks labeled from left to right as $(\mathbf{w},\mathbf{b})_{\sigma+1}:=(\mathbf{w},\mathbf{b})_{\sigma+2}:= \cdots := (\mathbf{w},\mathbf{b})_\sigma := \emptyset$ \;
\For{$i=1$ \KwTo $k+1$}{
	Set $t_i:=x_{i}-x_{i-1}$\;
	Insert $t_i$ as the rightmost letter in block $(\mathbf{w},\mathbf{b})_{x_{i-1}}$\;
}
Return $(\mathbf{w},\mathbf{b})$\;
\label{map:forward}
\caption{$p: \mathbb{W}_m^k \to (\mathbb{W}_m^k)^*$.}
\end{algorithm}

On the other hand, given a partitioned word $(\mathbf{w},\mathbf{b})$, we may perform Algorithm~\ref{map:inverse}.

\begin{algorithm}[ht]
\KwIn{A partitioned word $(\mathbf{w},\mathbf{b})$.}
\KwOut{A pair $(\mathbf{x}, (\mathbf{w'},\mathbf{b'}))$, where $\mathbf{w'}$ is a subword of $\mathbf{w}$.}
$t:=0$\;
\For{$i=1$ \KwTo $k+2$}{
	\If{$(\mathbf{w},\mathbf{b})_t \neq \emptyset$}{
		Let $v_{i}$ be the leftmost letter in $(\mathbf{w},\mathbf{b})_t$\;
		Delete $v_i$ from $(\mathbf{w},\mathbf{b})_t$\;
		Update the current block to $t:=t+v_i \pmod m$\;
		Set $x_i:=t$;
	}
	\Else{Return $(x_1 x_2 \ldots x_{i-1}, (\mathbf{w},\mathbf{b}))$\;}
}
\caption{$(\mathbb{W}_m^k)^* \to \mathbb{W}_m^k \times (\mathbb{W}_m^{k-i+1})^*$}
%A generalized version of $(w^*)^{-1}$, using partitioned words.}
\label{map:inverse}
\end{algorithm}

\begin{definition} \rm
We say that Algorithm~\ref{map:inverse} \emph{succeeds} on $(\mathbf{w},\mathbf{b})$ if the length of the output $\mathbf{x}$ is the same as the length of $\mathbf{w}$ (so that the output $\mathbf{w'}$ is empty).  In this case, $x_{k+1}=\sigma$ and we call $(\mathbf{w},\mathbf{b})$ (or just $\mathbf{b}$ when $\mathbf{w}$ is understood) a \emph{successful} partition of $\mathbf{w}$.
\end{definition}

Figure~\ref{fig:algoex} illustrates Algorithm~\ref{map:inverse} applied to a successful partition.

\begin{figure}[ht]
\begin{center}
\begin{tabular}{|c|c|c|c|}
\hline
$i$ & $t$ & $(\mathbf{w},\mathbf{b})$ & $x_1 x_2 \ldots x_{i-1}$ \\ \hline
1 & 0 & $\mathbf{3}|2|1|0302$ & $\cdot$\\ \hline
2 & 3 & $\cdot|2|1|\mathbf{0}302$ & $3$\\ \hline
3 & 3 & $\cdot|2|1|\mathbf{3}02$ & $33$\\ \hline
4 & 2 & $\cdot|2|\mathbf{1}|02$ & $332$\\ \hline
5 & 3 & $\cdot|2|\cdot|\mathbf{0}2$ & $3323$\\ \hline
6 & 3 & $\cdot|2|\cdot|\mathbf{2}$ & $33233$\\ \hline
7 & 1 & $\cdot|\mathbf{2}|\cdot|\cdot$ & $332331$\\ \hline
8 & 3 & $\cdot|\cdot|\cdot|\cdot$ & $3323313$\\ \hline

%2 & 1 & $3|202|\cdot|\mathbf{0}221$ & 1 \\ \hline
%3 & 1 & $3|202|\cdot|\mathbf{2}21$ & 11 \\ \hline
%4 & 3 & $3|\mathbf{2}02|\cdot|21$ & 113 \\ \hline
%5 & 1 & $3|02|\cdot|\mathbf{2}1$ & 1131 \\ \hline
%6 & 3 & $3|\mathbf{0}2|\cdot|1$ & 11313 \\ \hline
%7 & 3 & $3|\mathbf{2}|\cdot|1$ & 113133 \\ \hline
%8 & 1 & $3|\cdot|\cdot|\mathbf{1}$ & 1131331 \\ \hline
%9 & 2 & $\mathbf{3}|\cdot|\cdot|\cdot$ & 11313312 \\ \hline
%10 & 1 & $\cdot|\cdot|\cdot|\cdot$ & 113133121 \\ \hline
\end{tabular}
\end{center}
\caption{Algorithm~\ref{map:inverse} applied to a successful partition of the word $\mathbf{w}=3210302$, with $m=4$, $k=6$.}
\label{fig:algoex}
\end{figure}

\begin{lemma} For $\mathbf w \in \mathbb W_m^k$, 
Algorithm~\ref{map:inverse} succeeds when it is applied to $p(\mathbf w)$,
and the output of the algorithm is $\mathbf w$.  
\end{lemma}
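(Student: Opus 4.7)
The plan is to run Algorithms~\ref{map:forward} and~\ref{map:inverse} in parallel and establish, by induction on $i \in \{0, 1, \ldots, k+1\}$, the following invariant after the $i$-th iteration of Algorithm~\ref{map:inverse} applied to $p(\mathbf{w})$: the running value $t$ equals $w_i$ (with the convention $w_0 := 0$); the letters $x_1, \ldots, x_i$ already recovered satisfy $x_j = w_j$ for $1 \le j \le i$; and each block of the evolving partitioned word contains exactly those letters $t_r = w_r - w_{r-1}$ that $p$ inserted at steps $r > i$, listed in the order in which $p$ inserted them.

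The key observation is a first-in--first-out correspondence between the two algorithms. In Algorithm~\ref{map:forward}, the block labeled $c \in \mathbb{Z}/m\mathbb{Z}$ receives an insertion at precisely those indices $j$ for which $w_{j-1} = c$, and each new letter is appended on the right. Under the inductive hypothesis, Algorithm~\ref{map:inverse} at step $j$ has $t = w_{j-1}$, so it visits the block labeled $w_{j-1}$ and removes its leftmost letter. Consequently, the $\ell$-th insertion into block $c$ by $p$ is exactly what is removed at the $\ell$-th visit to block $c$ by the inverse, so the oldest remaining letter of each block matches the one $p$ deposited at the smallest $r > i$ with $w_{r-1}$ equal to that label.

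Given this correspondence, the inductive step is mechanical. At step $i$ we have $t = w_{i-1}$, the leftmost letter of block $w_{i-1}$ is exactly $t_i = w_i - w_{i-1}$, so the algorithm sets $v_i = w_i - w_{i-1}$, updates $t$ to $w_{i-1} + (w_i - w_{i-1}) = w_i$, and assigns $x_i := w_i$. In particular, block $w_{i-1}$ is nonempty at this moment, so the algorithm does not return prematurely. For termination, observe that after $k+1$ successful iterations all $k+1$ letters inserted by $p$ have been removed, so every block is empty; iteration $k+2$ therefore returns the pair $(w_1 w_2 \cdots w_{k+1}, \emptyset)$. This simultaneously shows that Algorithm~\ref{map:inverse} succeeds on $p(\mathbf{w})$ and that its output word is $\mathbf{w}$.

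I expect the main (though modest) obstacle to be stating the invariant carefully enough to capture the FIFO matching between insertions by $p$ and removals by the inverse, and in particular verifying that the block indexing via residues modulo $m$ used by the two algorithms agrees at every step. Once that invariant is cleanly written down, every assertion in the inductive step is a direct computation, and the termination argument is a one-line count of insertions versus removals.
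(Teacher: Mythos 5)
Your proof is correct and is essentially the paper's argument: the paper disposes of this lemma with the single sentence that Algorithm~\ref{map:inverse} undoes Algorithm~\ref{map:forward} one step at a time, and your FIFO invariant (rightmost insertion by $p$ versus leftmost removal by the inverse, with $t=w_{i-1}$ at step $i$) is precisely the careful version of that statement. The only detail worth adding when you write it up is the one-line check that the two algorithms label blocks consistently, namely that $\sigma=\sum_i t_i = w_{k+1}\pmod m$.
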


\begin{proof} Algorithm~\ref{map:inverse} undoes Algorithm~\ref{map:forward},
one step at a time. \end{proof}

Thanks to the previous lemma, for any $(\mathbf w, \mathbf b)$ in 
$\mathbb I_m^k$, we can define $q(\mathbf w,\mathbf b)$ to be the result of
applying Algorithm~\ref{map:inverse} to $(\mathbf w,\mathbf b)$, and we
have that $q(p(\mathbf w))=\mathbf w$.  

We can restate the previous lemma in the following way:

\begin{corollary} The maps $p$ and $q$ are mutually inverse bijections
between $\mathbb W^k_m$ and $\mathbb I^k_m$.  \end{corollary}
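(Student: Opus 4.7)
The plan is to observe that this corollary is essentially a formal repackaging of the preceding lemma, together with the definition of $\mathbb{I}_m^k$ as $p(\mathbb{W}_m^k)$. The preceding lemma establishes that $q \circ p = \mathrm{id}_{\mathbb{W}_m^k}$, so the first step is to conclude from this that $p$ is injective — any map admitting a left inverse is injective. Since $\mathbb{I}_m^k$ is defined to be the image of $p$, the map $p \colon \mathbb{W}_m^k \to \mathbb{I}_m^k$ is surjective by construction. Combining injectivity and surjectivity, $p$ is a bijection between $\mathbb{W}_m^k$ and $\mathbb{I}_m^k$.

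The second step is to verify that $q$ is the two-sided inverse of $p$, i.e.\ that $p \circ q = \mathrm{id}_{\mathbb{I}_m^k}$. Given $(\mathbf{w},\mathbf{b}) \in \mathbb{I}_m^k$, by definition of the image there exists $\mathbf{x} \in \mathbb{W}_m^k$ with $p(\mathbf{x}) = (\mathbf{w},\mathbf{b})$; this $\mathbf{x}$ is unique by the injectivity just established. The preceding lemma then gives $q(\mathbf{w},\mathbf{b}) = q(p(\mathbf{x})) = \mathbf{x}$, and applying $p$ once more recovers $(\mathbf{w},\mathbf{b})$. Hence $p \circ q = \mathrm{id}_{\mathbb{I}_m^k}$ as required.

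There is no real obstacle in the corollary itself: all of the genuine combinatorial content — that Algorithm~\ref{map:inverse} successfully undoes Algorithm~\ref{map:forward} step by step — is hidden in the preceding lemma, and the corollary is the standard observation that a map with a right-and-left inverse on its image is a bijection onto that image. The only subtle point worth flagging explicitly in the write-up is that $q$ is a priori only defined on $\mathbb{I}_m^k$ (Algorithm~\ref{map:inverse} need not succeed on arbitrary partitioned words), which is exactly why the target of the bijection is $\mathbb{I}_m^k$ rather than all of $(\mathbb{W}_m^k)^*$.
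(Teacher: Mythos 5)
Your proposal is correct and follows essentially the same route as the paper: injectivity of $p$ from $q\circ p=\mathrm{id}$, surjectivity onto $\mathbb{I}_m^k$ by definition of the image, and then $q$ is the two-sided inverse. The only difference is that you spell out the verification of $p\circ q=\mathrm{id}_{\mathbb{I}_m^k}$, which the paper leaves implicit.
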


\begin{proof} The fact that $q\circ p$ is the identity implies that 
$p$ is injective, and $p$ is surjective by definition.  Thus $p$ is a 
bijection, and $q$ is its inverse. \end{proof}

%By construction, we see that Algorithms~\ref{map:forward} and~\ref{map:inverse} are inverses.

%\begin{proposition}
%\label{prop:actsasinverse}
%Algorithm~\ref{map:inverse} acts as $(w^*)^{-1}$ on successful partitions.
%\end{proposition}

\vspace{20pt}

We now inductively define a tree of words on which Algorithm~\ref{map:inverse} succeeds.  
%The surprise is that this construction turns out to give all words in the image of Algorithm~\ref{map:forward}.

\begin{definition}\rm
\label{def:tree}
Define an infinite complete $m$-ary tree $\mathcal{T}_m^*$ by
\begin{enumerate}
	\item The $0$th rank consists of the empty word $\mathbf{\cdot}$, partitioned as $\underset{1}{\cdot} | \underset{2}{\cdot} | \cdots | \underset{0}{\cdot}$.
	\item The children of a partitioned word $(\mathbf{w},\mathbf{b})$ are the $m$ words obtained by prepending $-i \pmod m$ to $(\mathbf{w},\mathbf{b})_i$.
\end{enumerate}
\end{definition}

Figure~\ref{fig:tree} gives the first few rows of $\mathcal{T}_3^*$.

\begin{lemma} \label{lemma:all} $\mathcal T_m^*$ consists of all the words on which 
Algorithm~\ref{map:inverse} succeeds.\end{lemma}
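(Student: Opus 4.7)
The plan is to prove both inclusions by induction on the length $k+1$ of the word, built around a single bookkeeping identity that links the tree edge ``prepend $-i$ to block $i$'' with one iteration of Algorithm~\ref{map:inverse}.

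The key identity I would record first is this: if $(\mathbf{w},\mathbf{b})$ has sum $\sigma$ and $(\mathbf{w}',\mathbf{b}')$ is obtained by prepending $-i\pmod m$ to $(\mathbf{w},\mathbf{b})_i$, then the new sum is $\sigma-i$, and the block occupying a given physical position carries child-label $L$ exactly when it previously carried parent-label $L+i$. In particular, child-block $0$ is the same physical block as parent-block $i$ (with the prepended letter $-i$ now as its leftmost entry), and child-block $-i$ is the same physical block as parent-block $0$. This is an immediate computation from the labeling convention $\sigma+1,\sigma+2,\ldots,\sigma$.

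For the inclusion $\mathcal{T}_m^*\subseteq\{\text{successful}\}$, I would induct on rank. The rank-$0$ empty partitioned word trivially succeeds. For the inductive step, assume $(\mathbf{w},\mathbf{b})$ of rank $k+1$ is successful and consider its $i$-th child $(\mathbf{w}',\mathbf{b}')$. The first iteration of Algorithm~\ref{map:inverse} on the child reads $-i$ from child-block $0$, deletes it (so the underlying block is now parent-block $i$), and sets $t:=-i$; by the key identity the next block examined, child-block $-i$, coincides with parent-block $0$. An easy secondary induction on iteration number then shows that every subsequent iteration on the child examines the same physical block and removes the same letter as the corresponding iteration on the parent, with the child's running index always equal to the parent's running index shifted by $-i$. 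Hence the child completes in $k+2$ iterations and is successful.

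For the reverse inclusion, I would induct on length. Let $(\mathbf{w},\mathbf{b})$ be successful and non-empty; then block $0$ must be non-empty (otherwise Algorithm~\ref{map:inverse} returns the empty word on the first pass), and let $-i\pmod m$ be its leftmost letter. Erasing this letter yields $(\mathbf{w}'',\mathbf{b}'')$ of sum $\sigma+i$, and the key identity shows that the block from which $-i$ was pulled carries label $i$ in $(\mathbf{w}'',\mathbf{b}'')$; hence $(\mathbf{w},\mathbf{b})$ is precisely the $i$-th child of $(\mathbf{w}'',\mathbf{b}'')$ in $\mathcal{T}_m^*$. The same alignment argument as above shows that the remaining $k+1$ iterations of Algorithm~\ref{map:inverse} on $(\mathbf{w},\mathbf{b})$ are a disguised run of the algorithm on $(\mathbf{w}'',\mathbf{b}'')$, so $(\mathbf{w}'',\mathbf{b}'')$ is itself successful; by the inductive hypothesis it lies in $\mathcal{T}_m^*$, hence so does $(\mathbf{w},\mathbf{b})$. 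The only real obstacle is the bookkeeping of the cyclically-shifted labels $\sigma+1,\ldots,\sigma$; once the key identity is proven, both directions are mechanical.
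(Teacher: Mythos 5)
Your proposal is correct and follows essentially the same route as the paper's proof: both directions go by induction (on rank for $\mathcal{T}_m^*\subseteq\{\text{successful}\}$, on word length for the converse), with the key observation that one iteration of Algorithm~\ref{map:inverse} on a child is exactly a run of the algorithm on its parent after the prepended letter is stripped off. Your ``key identity'' about the cyclic shift of block labels is just an explicit spelling-out of the bookkeeping the paper leaves implicit in the phrase ``it is straightforward to see.''
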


\begin{proof} We first establish that Algorithm~\ref{map:inverse} succeeds
on every word in $\mathcal T_m^*$.  The proof is by induction on the rank in
the tree.  
Suppose that $(\mathbf w,\mathbf b)$ is in
$\mathcal T_m^*$, so it was obtained by prepending $-i$ (mod $m$) to
some word $(\mathbf {w'},\mathbf {b'})$ in the previous rank of 
$\mathcal T_m^*$.  It is straightforward to see that 
after the first iteration through the main loop of Algorithm~\ref{map:inverse},
the updated value of $(\mathbf w,\mathbf b)$ is $(\mathbf {w'},\mathbf {b'})$,
and the desired result follows by induction.  

We now prove the converse, that if Algorithm~\ref{map:inverse} succeeds
on $(\mathbf w,\mathbf b)$, then $(\mathbf w,\mathbf b)$ appears in
$\mathcal T^*_m$.  The proof is by induction on the length of 
$\mathbf w$.   Suppose that Algorithm~\ref{map:inverse} succeeds on
$(\mathbf w,\mathbf b)$, and suppose that $\mathbf w$ has positive length.
Since  
Algorithm~\ref{map:inverse} does not halt on the first step, then on the first
step it removes a single letter from $(\mathbf w,\mathbf b)$, obtaining
some successful word $(\mathbf {w'},\mathbf {b'})$.  By the induction hypothesis, this word appears
in $\mathcal T_m^*$, and the tree was defined in such a way that
the children of $(\mathbf {w'},\mathbf {b'})$ include $(\mathbf w,\mathbf b)$.
\end{proof}

%This definition is contrived so that Algorithm~\ref{map:inverse} succeeds on words in $\mathcal{T}_m^*$.
%\begin{enumerate}
%	\item If $\mathbf{\underline{w'}}$ is formed by prepending $-i$ to the $i$th block of $\mathbf{\underline{w}}$, then the block labeled by $i$ in $\mathbf{\underline{w}}$ becomes the block labeled by 0 in $\mathbf{\underline{w'}}$, and
%	\item The inverse map starts with the prepended $-i$ and returns us to performing $w^{-1}$ on $\mathbf{\underline{w}}$.
%\end{enumerate} 

The next step in our argument is the following proposition:

\begin{proposition}\label{prop:success}
Any $\mathbf w\in \mathbb W^k_m$ admits a successful partition
$(\mathbf w,\mathbf b)$.  
\end{proposition}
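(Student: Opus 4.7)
The plan is to induct on the length $n+1$ of $\mathbf w$.  The base case is the empty word, which trivially admits the all-empty partition sitting at the root of $\mathcal T_m^*$.  For the inductive step, I assume every length-$n$ word admits a successful partition.  Since Lemma~\ref{lemma:all} combined with the $m$-ary structure of $\mathcal T_m^*$ gives exactly $m^n$ rank-$n$ nodes, and $|\mathbb W_m^{n-1}| = m^n$ as well, the inductive hypothesis automatically upgrades by a cardinality squeeze: the rank-$n$ forgetful map $f$ must be a bijection, so every length-$n$ word has a \emph{unique} successful partition.

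Now fix $\mathbf w \in \mathbb W_m^n$ of length $n+1$, set $\sigma = \sum w_i \bmod m$, and pick $r_0 \in \{1,\ldots,m\}$ with $r_0 \equiv -\sigma \pmod m$.  Any successful partition $(\mathbf w,\mathbf b)$ of $\mathbf w$ occurs in $\mathcal T_m^*$ as a child whose parent is obtained by removing the first letter of the physical block at position $r_0$, namely $w_p$ with $p = b_{r_0-2}+1$.  Conversely, for any $p \in \{1,\ldots,n+1\}$ let $\mathbf w^{(p)}$ be $\mathbf w$ with $w_p$ removed; by the strengthened IH, $\mathbf w^{(p)}$ has a unique successful partition with block sizes $n'_1,\ldots,n'_m$, and re-inserting $w_p$ at the start of the block at position $r_0$ produces a successful child with underlying word $\mathbf w$ precisely when the cumulative offset $\bar p(\mathbf w^{(p)}) := n'_1 + \cdots + n'_{r_0-1} + 1$ equals $p$.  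Thus the proposition reduces to the fixed-point statement: some $p \in \{1,\ldots,n+1\}$ satisfies $p = \bar p(\mathbf w^{(p)})$.

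The way I would extract such a fixed point is by double-counting.  The map
\[
\Phi : \mathbb W_m^{n-1} \times \mathbb Z/m\mathbb Z \longrightarrow \mathbb W_m^n, \qquad (\mathbf w',i) \mapsto \text{underlying word of the child of $\mathbf w'$ via index $i$},
\]
is well-defined by the strengthened IH and has equal-sized domain and codomain (both of size $m^{n+1}$), so injectivity of $\Phi$ implies surjectivity and hence existence.  Injectivity within a single fibre $\mathbf w' = \mathrm{const}$ is immediate: the $m$ children of a fixed parent have different first letters of their label-$0$ block, sitting at different positions.  The main obstacle is ruling out cross-parent collisions: if $\Phi(\mathbf w'_1,i_1) = \Phi(\mathbf w'_2,i_2) = \mathbf w$ with $\mathbf w'_1 \neq \mathbf w'_2$, then both $p_1$ and $p_2$ satisfy the fixed-point equation above, which would force the IH-unique partitions of $\mathbf w^{(p_1)}$ and $\mathbf w^{(p_2)}$ to have incompatible block-$r_0$ widths, despite $\mathbf w^{(p_1)}$ and $\mathbf w^{(p_2)}$ differing only by the deletion of a single letter.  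Carrying out this comparison — essentially tracking how the walk traced by Algorithm~\ref{map:inverse} responds to a local deletion, so that $p - \bar p(\mathbf w^{(p)})$ can vanish only once — is where the real work of the proposition lives; the rest is bookkeeping about the tree and its ranks.
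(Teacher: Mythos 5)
Your proposal has a genuine gap at exactly the point where the real content of the proposition lies. The inductive framework is sound as far as it goes: the cardinality squeeze upgrading the inductive hypothesis to uniqueness at rank $n$ is valid (it is the same counting the paper uses after the fact to deduce Proposition~\ref{prop:dendro}), and the reduction of existence at rank $n+1$ to injectivity of your map $\Phi$ is logically correct, as is the within-fibre injectivity. But the cross-parent injectivity --- equivalently, the existence of a fixed point $p=\bar p(\mathbf w^{(p)})$ --- is asserted to be ``where the real work lives'' and then not carried out. That step is not bookkeeping: you would need to control how the entire walk of Algorithm~\ref{map:inverse}, and hence all the block sizes $n'_1,\dots,n'_m$ of the unique successful partition, respond to deleting one letter $w_p$ from $\mathbf w$, and then show the resulting function $p\mapsto \bar p(\mathbf w^{(p)})$ meets the diagonal. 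Nothing in your write-up establishes monotonicity or any other structure of this function, so the argument as given does not prove the proposition. (The authors themselves remark that their direct proof of uniqueness was ``fairly involved,'' which is a hint that the comparison you are deferring is not routine.)

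For contrast, the paper sidesteps this entirely with a constructive, non-inductive argument: it introduces the balancing matrix and the notion of an \emph{equitable} partition, proves every word has a (rightmost) equitable partition via Algorithm~\ref{map:rightmost}, and then runs Algorithm~\ref{map:success}, which repeatedly applies Algorithm~\ref{map:inverse} and, upon failure, shifts the undigested letters one block to the right, producing a new equitable partition strictly further right than the last. Finiteness of the set of equitable partitions plus this strict monotonicity forces termination at a successful partition. If you want to salvage your approach, you must either prove the deletion-stability/fixed-point claim in full, or replace it with a termination argument of the paper's monotone-improvement type.
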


We defer the proof of this result to the
next section.

It is clear that the $k$-th row of $\mathcal T$ consists of $m^k$ partitioned
words.  According to Proposition~\ref{prop:success}, 
for each word $\mathbf w$ of length $k$, we are able to find a 
successful partition $(\mathbf w,\mathbf b)$.  This accounts for $m^k$ distinct partitioned words of length $k$ on which 
Algorithm~\ref{map:inverse} succeeds, and therefore, by Lemma~\ref{lemma:all},
this must be all of them.  It then follows that there is a 
unique successful partition of any word.  (We have also found a direct
proof of the uniqueness of the successful partition of $\mathbf w$, but
it was fairly involved, so we preferred not to present it.)
At this point, we therefore have the following:

\begin{proposition}[Dendrodistinctivity]
\label{prop:dendro}
Let $\mathcal{T}_m$ be the infinite complete $m$-ary tree obtained from $\mathcal{T}_m^*$ by replacing each partitioned word $(\mathbf{w},\mathbf{b})$ with its underlying word $\mathbf{w}$.  Then each word of length $k+1$ on $\mathbb{Z}/m\mathbb{Z}$ appears exactly once.
\end{proposition}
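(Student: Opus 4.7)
The plan is to combine three ingredients already assembled in the paper: (i) the structural fact that $\mathcal{T}_m^*$ is a complete $m$-ary tree, (ii) Lemma~\ref{lemma:all}, which identifies the vertices of $\mathcal{T}_m^*$ with the partitioned words on which Algorithm~\ref{map:inverse} succeeds, and (iii) Proposition~\ref{prop:success}, which guarantees that every word in $\mathbb{W}_m^k$ admits at least one successful partition. The argument is then a short pigeonhole count.

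First I would observe that because $\mathcal{T}_m^*$ is complete $m$-ary with a single vertex at rank $0$, its rank-$\ell$ level contains exactly $m^\ell$ partitioned words, all of whose underlying words have length $\ell$. By Lemma~\ref{lemma:all}, these are precisely the partitioned words $(\mathbf{w},\mathbf{b})$ of length $\ell$ on which Algorithm~\ref{map:inverse} succeeds. On the other hand, the total number of words of length $\ell$ over $\mathbb{Z}/m\mathbb{Z}$ is also $m^\ell$, and Proposition~\ref{prop:success} supplies, for every such word $\mathbf{w}$, at least one successful partition. Fibering the $m^\ell$ rank-$\ell$ vertices of $\mathcal{T}_m^*$ over their underlying words via the forgetful map $f$ therefore places $m^\ell$ objects into $m^\ell$ bins with no bin empty, which forces every fiber to have size exactly one. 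Passing to $\mathcal{T}_m$ by forgetting the partition, each word of the appropriate length thus appears exactly once.

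The main obstacle is really Proposition~\ref{prop:success} itself, that is, proving existence of a successful partition for every word; that is deferred to the next section and I would invoke it here as a black box. Everything else is formal: the existence half of the conclusion comes from Proposition~\ref{prop:success}, and uniqueness comes for free out of the counting once we know the rank size matches the number of words, which is exactly the parenthetical remark in the discussion preceding the proposition (and explains why the authors can avoid their more involved direct proof of uniqueness).
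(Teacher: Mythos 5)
Your proposal is correct and follows essentially the same route as the paper: count the $m^\ell$ vertices at rank $\ell$ of $\mathcal{T}_m^*$, identify them via Lemma~\ref{lemma:all} with the successful partitioned words, and use Proposition~\ref{prop:success} (taken as a black box, exactly as the paper does) to conclude by pigeonhole that every word has exactly one successful partition and hence appears exactly once in $\mathcal{T}_m$. The paper's own argument is precisely this counting step, stated in the paragraph immediately preceding the proposition, including the same observation that uniqueness falls out of the count rather than requiring a direct proof.
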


Now, for $\mathbf w \in \mathbb W_m^k$, we can define $g(\mathbf w)$
to be the unique successful partition of $\mathbf w$.  Since we know that
$p(\mathbf w)$ is a successful partition of $\mathbf w$, and there is only one,
it must be that $g(\mathbf w)=p(\mathbf w)$.  Therefore $g(\mathbf w)\in
\mathbb I^k_m$, and $q(g(f(p(\mathbf w))))=\mathbf w$.  

This establishes the following theorem:

\begin{theorem}\label{theorem:bijections}
The maps 
$(f\circ p)$ and $(q \circ g)$ are mutually inverse bijections from
$\mathbb W_m^k$ to $\mathbb W_m^k$.
\end{theorem}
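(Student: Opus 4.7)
The plan is to factor the claim into two pairs of already-established mutually inverse bijections and then compose. The corollary above gives that $p$ and $q$ are mutually inverse bijections between $\mathbb{W}_m^k$ and $\mathbb{I}_m^k$, so the remaining task is to prove the analogous fact for $f|_{\mathbb{I}_m^k}$ and $g$ as maps between $\mathbb{I}_m^k$ and $\mathbb{W}_m^k$. Once that is in hand, the routine identity $(q \circ g) \circ (f \circ p) = q \circ p = \mathrm{id}$, and symmetrically $(f \circ p) \circ (q \circ g) = \mathrm{id}$, finishes the theorem.

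The first step I would carry out is to identify $\mathbb{I}_m^k$ with the set of all successful partitioned words whose underlying word has length $k+1$. One inclusion is immediate: the lemma that Algorithm~\ref{map:inverse} succeeds on $p(\mathbf{w})$ places every element of $\mathbb{I}_m^k$ inside the successful set. For the reverse inclusion I would compare cardinalities: since $p$ is injective we have $|\mathbb{I}_m^k| = m^{k+1}$, and Lemma~\ref{lemma:all} combined with Proposition~\ref{prop:dendro} shows that the set of successful partitions of words of length $k+1$ also has cardinality $m^{k+1}$ (the $(k+1)$st rank of $\mathcal{T}_m^*$, in which each underlying word appears exactly once). So the two sets agree. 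Given this, surjectivity of $f|_{\mathbb{I}_m^k}$ is Proposition~\ref{prop:success} and injectivity is the uniqueness half of Proposition~\ref{prop:dendro}; the map $g$ is by construction the two-sided inverse, sending $\mathbf{w}$ to its unique successful partition.

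The main obstacle is not in this theorem itself but upstream: it is Proposition~\ref{prop:success}, the existence of a successful partition for an arbitrary word, which is deferred to the next section. Once that existence is available, uniqueness, dendrodistinctivity, and the bijectivity of $f \circ p$ all drop out from the counting argument above, and everything else in the theorem is formal composition along the diagram already drawn in Section~\ref{sec:dendro}.
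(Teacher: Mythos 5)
Your proposal is correct and follows essentially the same route as the paper: both treat the theorem as formal composition along the diagram $\mathbb{W}_m^k \to \mathbb{I}_m^k \to \mathbb{W}_m^k$, using the corollary that $p$ and $q$ are mutually inverse together with the identification of $\mathbb{I}_m^k$ with the set of successful partitioned words via Lemma~\ref{lemma:all}, Proposition~\ref{prop:success}, and the counting/uniqueness argument behind Proposition~\ref{prop:dendro}. Your version is in fact slightly cleaner in phrasing the second factor as the mutually inverse pair $f|_{\mathbb{I}_m^k}$ and $g$, but the substance is identical, and you correctly locate the only real content upstream in Proposition~\ref{prop:success}.
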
  

%The advantage of constructing this tree is that we automatically obtain the uniqueness of successful partitions if we are able to construct one for every word in $\mathbb{W}_m^k$.  This is because there are exactly $m^{k+1}$ words on the $(k+1)$st rank of $\mathcal{T}_m^*$, and because a successful partition shows us where a word lies in the tree.  It is also possible to give a direct argument of uniqueness.% that $w^*(\mathbb{W}_m^k) = (\mathcal{T}_m^k)^*$.

\begin{figure}[ht]

\begin{center}
\resizebox*{!}{2in}{$$\xymatrix @-1pc {
& & & & \cdot | \cdot | \cdot \ar@{-}[dd]\ar@{-}[ddlll]\ar@{-}[ddrrr] & & & & \\
& & & & & & & & & \\
& \cdot | \cdot | 0 \ar@{-}[dd]\ar@{-}[ddl]\ar@{-}[ddr]& & & \cdot | 1 | \cdot \ar@{-}[dd]\ar@{-}[ddl]\ar@{-}[ddr] & & & 2 | \cdot | \cdot \ar@{-}[dd]\ar@{-}[ddl]\ar@{-}[ddr] &\\
& & & & & & & & & \\
\cdot | \cdot | 00 & \cdot | 1 | 0 & 2 | \cdot | 0 & \cdot | 01 | \cdot & 1 | 1 | \cdot & \cdot | 1 | 2 & 02 | \cdot | \cdot & 2 | \cdot | 1 & 2 | 2 | \cdot \\
\vdots & \vdots & \vdots & \vdots & \vdots & \vdots & \vdots & \vdots & \vdots \\ 
}$$}

%\resizebox*{!}{2in}{$$\xymatrix @-1pc {
%& & & & \cdot \ar@{-}[dd]\ar@{-}[ddlll]\ar@{-}[ddrrr] & & & & \\
%& & & & & & & & & \\
%& 0 \ar@{-}[dd]\ar@{-}[ddl]\ar@{-}[ddr]& & &  1 \ar@{-}[dd]\ar@{-}[ddl]\ar@{-}[ddr] & & & 2 \ar@{-}[dd]\ar@{-}[ddl]\ar@{-}[ddr] &\\
%& & & & & & & & & \\
% 00 &  10 & 20 & 01 & 11 & 12 & 02 & 21 & 22 \\
%\vdots & \vdots & \vdots & \vdots & \vdots & \vdots & \vdots & \vdots & \vdots \\ 
%}$$}
\end{center}

\caption{The first few ranks of $\mathcal{T}_3^*$.}% and $\mathcal{T}_3$.}
\label{fig:tree}
\end{figure}
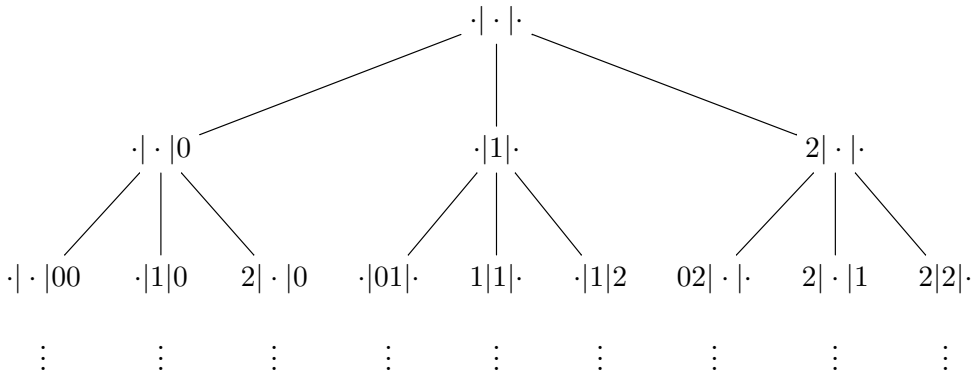

\vspace{20pt}
\section{Proof of Proposition~\ref{prop:success}}

In this section we provide the proof of Proposition~\ref{prop:success},
deferred from the previous section.

We begin our construction of a successful partition $(\mathbf{w},\mathbf{b^S})$ by encoding a partitioned word in a revealing way.   Given an index $i$, we find the position of the block to which the letter $w_i$ belongs by defining $\block(\mathbf{b},i)=t$, where $b_{t-1}+1 \leq i \leq b_t$ (and we take $b_0=0$).

\begin{definition} \rm
Define the $(k+1)\times m$ \emph{balancing matrix} $M_{(\mathbf{w},\mathbf{b})} = (m_{i,j})$ of a partitioned word $(\mathbf{w},\mathbf{b})$ by 
%m_{i,j} = 1 if b(\mathbf{w},i)\leq j \leq b(\mathbf{w},i)+w_i-1, 0 otherwise.
$$m_{i,j} = \begin{cases}
1 & \text{if }\block(\mathbf{b},i)\leq j \leq \block(\mathbf{b},i)+w_i-1, \\
0 & \text{otherwise.}
\end{cases}$$
\end{definition}

\begin{definition} \rm
We say that column $j$ of $M_{(\mathbf{w},\mathbf{b})}$ is \emph{equitably filled} if it has $\lfloor \frac{1}{m} \sum_{i=1}^{k+1} w_i \rfloor+1$ ones and $j \in \{m-\sigma,m-\sigma+1,\ldots,m-1\}$, or if it has $\lfloor \frac{1}{m} \sum_{i=1}^{k+1} w_i \rfloor$ ones and $j \not \in \{m-\sigma,m-\sigma+1,\ldots,m-1\}$.  We say that $M_{(\mathbf{w},\mathbf{b})}$ is \emph{equitably distributed} if each of its columns is equitably filled.
%a balancing matrix $M_{\mathbf{b(w)}}$ is \emph{evenly distributed} if there are $\lfloor \frac{\sum_{i=1}^{k+1} w_i}{m} \rfloor+1$ ones in columns $m-\sigma,m-\sigma+1,\ldots,m-1$, and $\lfloor \frac{\sum_{i=1}^{k+1} w_i}{m} \rfloor$ ones in each other column.  We say that a column is evenly distributed if 
If $M_{(\mathbf{w},\mathbf{b})}$ is equitably distributed, we call $(\mathbf{w},\mathbf{b})$ (or just $\mathbf{b}$ when $\mathbf{w}$ is understood) an \emph{equitable partition} for $\mathbf{w}$.
\end{definition}

Note that when $(\mathbf{w},\mathbf{b})$ is a successful partition, then it is also an equitable partition.  The converse is false---see Figure~\ref{fig:matsuc} for an example.

\begin{figure}[ht]
\begin{center}
\begin{tabular}{cc}
$\left( \begin{array}{c|cccc}
3 &1&1&1&\\ \hline
2 &&1&1&\\ \hline
1 &&&1&\\ \hline
0 &&&&\\ 
3 &1&1&&1\\
0 &&&&\\
2 &1&&&1\\
\end{array} \right)$ &
$\left( \begin{array}{c|cccc}
3 &1&1&1&\\
2 &1&1&&\\
1 &1&&&\\
0 &&&&\\ \hline
3 &&1&1&1\\
0 &&&&\\ \hline
2 &&&1&1\\ \hline
\end{array} \right)$
\end{tabular}
\end{center}
\caption{$M_{(\mathbf{w},\mathbf{b})}$ for the successful partition $3|2|1|0302$ and for the (rightmost but not successful) equitable partition $3210|30|2|\cdot$.}
\label{fig:matsuc}
\end{figure}

%example: even but not successful
\iffalse$$\begin{array}{ccccc}

2&X&X&&\\
0&\\
0&\\
3&X&X&X\\
\hline1&&X\\
\hline0&&&\\
3&X&&X&X\\
1&&&X\\
\hline1&&&&X
\end{array}$$\fi

We now construct a specific equitable partition, which is our first approximation to the successful partition $(\mathbf{w},\mathbf{b^S})$.

\begin{definition} \rm
Let a \emph{rightmost equitable partition} be an equitable partition $(\mathbf{w}, \mathbf{b})$ such that for any other equitable partition $(\mathbf{w},\mathbf{b'})$, we have $b_t \geq b'_t$ for $0\leq t \leq m-1$. 
\end{definition}

We prove existence and uniqueness of the rightmost equitable partition.

\begin{lemma}
\label{evenprop}
For any word $\mathbf{w}$, there is a unique rightmost equitable partition $(\mathbf{w},\mathbf{b^R})$.
\end{lemma}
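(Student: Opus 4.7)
Uniqueness is immediate: if $\mathbf{b}$ and $\mathbf{b'}$ are both rightmost equitable partitions, then $b_t \geq b'_t$ and $b'_t \geq b_t$ for every $t$, forcing $\mathbf{b} = \mathbf{b'}$. The substance of the lemma is existence, which I would split into two independent claims: (I) the set of equitable partitions of $\mathbf{w}$ is nonempty, and (II) this set is closed under the componentwise supremum $\mathbf{b} \vee \mathbf{b'}$. Combined, (I) and (II) imply that the join of all equitable partitions is itself equitable and is by construction the unique rightmost.

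For (I), I would proceed by induction on the length $k+1$ of $\mathbf{w}$, with the base case trivial. If $w_{k+1} = 0$, any equitable partition of $w_1 \cdots w_k$ (which has the same $q$ and $\sigma$ as $\mathbf{w}$) extends to one of $\mathbf{w}$ by placing the trailing zero in the last block. When $w_{k+1} > 0$, appending $w_{k+1}$ shifts the parameters $q$ and $\sigma$ that govern the target column sums, so one must produce an equitable partition of $w_1 \cdots w_k$ whose column profile differs from the new target by exactly the row contributed by placing $w_{k+1}$ in an appropriate final block. An alternative route is an explicit construction of a distinguished equitable partition via cumulative partial sums of $\mathbf{w}$ modulo $m$, followed by direct telescoping verification that the columns come out equitable.

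For (II), observe that under $\mathbf{b} \vee \mathbf{b'}$ each letter $w_i$ ends up in the earlier of its two original blocks, since $\block(\mathbf{b} \vee \mathbf{b'}, i) = \min(\block(\mathbf{b}, i), \block(\mathbf{b'}, i))$. Consequently, the column sums of $\mathbf{b} \vee \mathbf{b'}$ agree letter by letter with those of $\mathbf{b}$ on all $i$ with $\block(\mathbf{b}, i) \leq \block(\mathbf{b'}, i)$, and with those of $\mathbf{b'}$ otherwise. The crux is showing that the total contributions of the ``moved'' letters (those with $\block(\mathbf{b}, i) > \block(\mathbf{b'}, i)$) to each column are equal in $\mathbf{b}$ and in $\mathbf{b'}$ separately; granted this, the column sum vector of $\mathbf{b} \vee \mathbf{b'}$ coincides with that of $\mathbf{b}$ (and of $\mathbf{b'}$), so the join is equitable.

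The main obstacle is step (II). The naive observation that moving a single nonzero letter between blocks changes two column sums by $\pm 1$ only shows that individual letter moves destroy equitability; it does not on its own preclude the coordinated multi-letter movements that the join operation can perform. The actual argument must leverage the equality of column sum vectors of $\mathbf{b}$ and $\mathbf{b'}$ to derive the required cancellation among the moved letters, a verification I expect to require careful case analysis of how the moved rows interact across the cyclic (wrap-around) structure of the balancing matrix.
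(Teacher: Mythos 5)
Your overall architecture (uniqueness is trivial; existence follows from (I) the set of equitable partitions being nonempty and (II) its closure under componentwise join) is a legitimate strategy, but as written neither pillar is actually proved, so there is a genuine gap. For (II), you yourself identify the required cancellation among the ``moved'' letters as ``the crux'' and defer it to ``careful case analysis'' that you do not carry out; nothing in the proposal establishes that the column-sum vector of $\mathbf{b}\vee\mathbf{b'}$ equals that of $\mathbf{b}$, and the wrap-around of the rows of the balancing matrix makes this far from a formality. It is not even clear a priori that the set of equitable partitions is closed under join (the existence of a componentwise maximum, which is what the lemma asserts, does not by itself imply lattice closure), so this step cannot be waved through. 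For (I), the induction on the length of $\mathbf{w}$ also stalls exactly where you say it does: appending $w_{k+1}>0$ changes both $\sigma$ and the target column counts, so the inductive hypothesis hands you an equitable partition for the wrong target vector, and the ``alternative route'' via partial sums is only named, not executed. A proof proposal whose two load-bearing claims are both left open is not yet a proof.

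For comparison, the paper sidesteps both difficulties with a single greedy algorithm (Algorithm~\ref{map:rightmost}): start with all of $\mathbf{w}$ in the block $(\mathbf{w},\mathbf{b})_{\sigma+1}$ and repeatedly push the rightmost letter of the block corresponding to the first non-equitably-filled column into the next block. Existence and rightmostness are obtained simultaneously, via the invariant that any letter the algorithm moves out of the $i$-th part must lie to the right of that part in \emph{every} equitable partition; the only technical content is ruling out the two ways the algorithm could get stuck (trying to push out of the last block, or out of an empty block), each handled by a counting argument on equitably filled columns. If you want to salvage your route, the honest comparison is that your plan trades the paper's two termination checks for two harder unproven claims; I would either prove (II) in full (and I am not certain it is true) or switch to a constructive argument of the paper's type.
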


\begin{proof}
We claim that Algorithm~\ref{map:rightmost} constructs the unique rightmost equitable partition $(\mathbf{w},\mathbf{b^R})$.

\begin{algorithm}[ht]
\KwIn{A word $\mathbf{w}$.}
\KwOut{The rightmost equitable partition $(\mathbf{w},\mathbf{b^R})$.}
Set $(\mathbf{w},\mathbf{b^R})_{\sigma+1}:= \mathbf{w}$ and $(\mathbf{w},\mathbf{b^R})_{\sigma+2}:=(\mathbf{w},\mathbf{b^R})_{\sigma+3}:=\cdots:=(\mathbf{w},\mathbf{b^R})_{\sigma}:=\emptyset $\;
\While{$M_{(\mathbf{w},\mathbf{b^R})}$ is not equitably distributed}{
		Let column $t$ be the first non-equitably filled column of $M_{(\mathbf{w},\mathbf{b^R})}$\;
		Let $tmp$ be the rightmost element in $(\mathbf{w},\mathbf{b^R})_{\sigma+t}$\;
		Delete $tmp$ from $(\mathbf{w},\mathbf{b^R})_{\sigma+t}$\;
		Prepend $tmp$ to $(\mathbf{w},\mathbf{b^R})_{\sigma+t+1}$\;
}
Return $(\mathbf{w},\mathbf{b^R})$\;
\caption{The rightmost equitable partition.}
\label{map:rightmost}
\end{algorithm}

By construction---assuming Algorithm~\ref{map:rightmost} is well-defined---it returns the unique rightmost equitable partition, because at each step of 
Algorithm~\ref{map:rightmost}, if a letter is moved from the $i$-th part
to the $i+1$-st part, then it necessarily occurs to the right of the $i$-th
part in any equitable partition.

%   so that it terminates in the unique rightmost even partitioning as long as it does not get stuck.  
%If there are too many X's in a column, we must eventually push some of the
%letters in that part into a subsequent column:
%there is no chance that making other moves will
%fix the problem for us --- in fact, other moves can only make the situation
%worse.  
%Thus, if our algorithm does not

There are two ways that Algorithm~\ref{map:rightmost} might fail.  The first is if every column but the rightmost of $M_{(\mathbf{w},\mathbf{b^R})}$ is equitably filled, since then the algorithm then tries to push the rightmost entry of block $(\mathbf{w},\mathbf{b^R})_\sigma$ to the leftmost entry of $(\mathbf{w},\mathbf{b^R})_{\sigma+1}$, which would destroy the property that $\mathbf{b^R}$ is a partition of $\mathbf{w}$.  But if all other columns are equitably filled, then it follows from the definition that column $m$ is also equitably filled.  Therefore, this case does not occur.

%We should therefore ask what would cause the algorithm to get stuck.  We 
%would certainly be in trouble if we had too many X's in the 
%last column, because we can't move numbers further than the last part. 
%However, this isn't a risk, because the algorithm preserves the starting
%property that the total number of X's in the first $i$ columns is
%at least as many as we want,
%for all $i$.  
%If we have enough X's in the first $m$ columns, we cannot also have too
%many X's in the last column.  

The second way for Algorithm~\ref{map:rightmost} to fail to be well-defined is if it tries to push the rightmost entry of an empty block $(\mathbf{w},\mathbf{b^R})_{\sigma+t_0}$ to the block $(\mathbf{w},\mathbf{b^R})_{\sigma+t_0+1}$.  By assumption, we know that columns $t$ for $1\leq t < t_0$ of $M_{(\mathbf{w},\mathbf{b^R})}$ are all equitably filled.  Since $(\mathbf{w},\mathbf{b^R})_{\sigma+t_0}$ is empty, we know that the number of ones in column $t_0$ is at most the number of ones in column $t_0-1$.  By the definition of equitably filled, column $t$ must have the same number of ones as column $t-1$ for $t\neq m-\sigma$ and $t\neq m$.  But for $t = m-\sigma$, column $t$ must have exactly one more one than column $t-1$, and for $t=m$ we find ourselves back in the previous case.  Then column $t_0$ must have been equitably filled so that this case also does not occur.

%We might also worry that we could find ourselves wanting to remove a letter
%from a part in which there actually are no letters.  That is to say, the
%issue would be that the previous parts were already contributing too many
%X's into the $c$-th column, so that we would want to remove a letter from
%the $c$-th part, which was already empty.  Note, though, that the 
%number of X's contributed to a column from other parts, is no
%more than the number of X's in the column to its left.  Thus this problem
%only potentially arises if, in the even distribution, we want to have fewer
%X's in column $c$ than in column $c-1$.  But 
%the even distribution is set up in such a way that this only happens for
%$c=m$, and we know that we will never have too many X's in the last column.

%0,1,2,3...m-1
%1,2,3,...

%m-1,0,1,...,m-2
%1

\end{proof}

An example of the rightmost partition occurs as the rightmost example in Figure~\ref{fig:matsuc}.

%If we run this algorithm on our example $200310311$, the result is 
%$200310|3|1|1$.  Note that this is not a position of the dividers which arises 
%in the tree, so we still have more work to do.
  
%\begin{proposition}
%\label{exist}
%For any word $\w$, there is at least one way to insert dividers which produces a word as it appears in the tree.
%\end{proposition}

We now prove Proposition~\ref{prop:success}.% that every word of length $k+1$ on $\mathbb{Z}/m\mathbb{Z}$ has a successful partitioning, which implies that every such word has a unique successful partitioning.% from which we conclude that every such word appears exactly once in $T_m$.

\begin{proof}

We claim that Algorithm~\ref{map:success} constructs the unique successful partition $(\mathbf{w},\mathbf{b^S})$.

\begin{algorithm}[ht]
\KwIn{A word $\mathbf{w}$.}
\KwOut{The successful partition $(\mathbf{w},\mathbf{b^S})$.}
Let $(\mathbf{w},\mathbf{b^S})$ be the output $(\mathbf{w},\mathbf{b^R})$ from Algorithm~\ref{map:rightmost} applied to $\mathbf{w}$\;
\While{$(\mathbf{w},\mathbf{b^S})$ is not a successful partition}{
	Let $(\mathbf{w'},\mathbf{b'})$ be the second part of the output from Algorithm~\ref{map:inverse} applied to $(\mathbf{w},\mathbf{b^S})$\;
	\For{$t=1$ \KwTo $m-1$}{
		Delete the rightmost $|(\mathbf{w'},\mathbf{b'})_{\sigma+t}|$ entries from $(\mathbf{w},\mathbf{b^S})_{\sigma+t}$\;
		Prepend $(\mathbf{w'},\mathbf{b'})_{\sigma+t}$ to $(\mathbf{w},\mathbf{b^S})_{\sigma+t+1}$\;
	}
}	
%Set $\mathbf{b}_{\sigma+1} = \mathbf{w}$ and $\mathbf{b}_{\sigma+2}=\mathbf{b}_{\sigma+3}=\cdots=\mathbf{b}_{\sigma}=\{\}$\;
%\For{$t=1$ \KwTo $m$}{
%	\If{column $t$ of $M_\mathbf{b(w)}$ is not evenly distributed}{
%		Let $tmp$ be the rightmost element in $\mathbf{b}_{\sigma+t}$\;
%		Delete $tmp$ from $\mathbf{b}_{\sigma+t}$\;
%		Prepend $tmp$ to $\mathbf{b}_{\sigma+t+1}$\;
%		$t=0$\;
%	}
%}
Return $(\mathbf{w},\mathbf{b^S})$\;
\caption{The successful partition $f^{-1}(\mathbf{w})$.}
\label{map:success}
\end{algorithm}

By construction---assuming Algorithm~\ref{map:success} is well-defined---it returns a successful partition for any word $\mathbf{w}$ of length $k+1$.  
%But since rank $(k+1)$ of the tree $\mathcal{T}^*_m$ has $m^{k+1}$ words, and every word of length $(k+1)$ must therefore occur on that rank, it must be that this was the unique successful partition for the word $\mathbf{w}$.

%Begin with the rightmost position of the dividers which 
%produces an even distribution of X's.  As we saw in the example just
%above, this may not be the correct answer.  Attempt to remove letters 
%from this word.  As we remove letters, we are also going to remove the
%X's which they contribute.  
%It's clear which part to start in: either the one 
%corresponding to the leftmost
%column with the larger number of X's, or the last one if they are all equal.
%If you succeed, we are done.

To understand why this algorithm is well-defined, we will consider what the possibilities are for the second output of Algorithm~\ref{map:inverse}.  Algorithm~\ref{map:inverse} terminates when it tries to remove a letter from an empty block of $(\mathbf{w'},\mathbf{b'})$, which corresponds to a column in $M_{(\mathbf{w'},\mathbf{b'})}$ whose ones all came from other parts.  Furthermore, since Algorithm~\ref{map:inverse} removes entries while preserving the property that $M_{(\mathbf{w'},\mathbf{b'})}$ is equitably distributed, it must remove an entry corresponding to the leftmost column with the most number of ones or an entry corresponding to the rightmost column (if all columns have the same number of ones).  Putting these two pieces together, we see that Algorithm~\ref{map:inverse} terminates because it was trying to remove a letter from the rightmost block.  Shifting the letters remaining in $(\mathbf{w'},\mathbf{b'})$ while preserving those in $(\mathbf{w},\mathbf{b})$ that are not in $(\mathbf{w'},\mathbf{b'})$, we obtain a new equitable partition for $\mathbf{w}$.

Each time we repeat the process, the letters are moved further to the right, so
we cannot get back to an equitable partition which we had obtained 
previously.  Since there are only a finite number of equitable partitions,
the algorithm eventually terminates, finding a successful partition
$(\mathbf w,\mathbf b)$.  \end{proof}

%Since $M_{\mathbf{b'(w')}}$ is evenly distributed, 

%Suppose you get stuck.  That means you want
%to remove a letter from a part which is empty, which means that it 
%corresponds to a column all of whose X's come from a previous part (at this
%stage in the algorithm).  
%If you get stuck, 
%it must be in a state where there are an equal number of X's in each
%column, since otherwise, the column you are trying to remove from has
%more X's than its predecessor, so it has at least one letter in the
%corresponding part.  You are therefore stuck at a stage where you are trying
%to remove letters from the last column, so the last part is empty.  
%Note that, at the point where you got stuck, the remaining letters in each
%part are a final subsequence of the letters that were originally in that part.  
%Therefore, it is possible to move the dividers so that each of these 
%letters is in the next part to the right.  Since there were an equal number
%of X's in each column when we got stuck, this will still be true after the
%move, so the result will still be evenly distributed.  

%After applying this procedure $t$ times, the leftmost $t$ blocks of the output from Algorithm~\ref{map:inverse} will be empty.  
%It therefore takes at most $m$ iterations for this algorithm to return a successful partition $(\mathbf{w}, 
%\mathbf b)$.
%\end{proof}

Figure~\ref{fig:partition} gives an example of Algorithm~\ref{map:success}.

\begin{figure}[ht]

%We compute $\mathbf{b^S}(3210302)$ using Algorithm~\ref{map:success}.  We begin with the rightmost partitioning $3210|30|2|\cdot$.

\begin{tabular}{|c|c|}
\hline
The current partition $(\mathbf{w},\mathbf{b^S})$ & The output $(\mathbf{w'},\mathbf{b'})$ from Algorithm~\ref{map:inverse}\\ \hline
$(\mathbf{w},\mathbf{b^R}) = 3210|30|2|\cdot$ & $210|30|2|\cdot$ \\ \hline
$3|\mathbf{210}|\mathbf{30}|\mathbf{2}$ & $\cdot|10|30|\cdot$ \\ \hline
$3|2|\mathbf{10}|\mathbf{30}2$ & $\cdot|\cdot|0|\cdot$ \\ \hline
$3|2|1|\mathbf{0}302$ & $\cdot|\cdot|\cdot|\cdot$ \\ \hline
\end{tabular}

%{{{3},{2},{1},{0,3,0,2}},{{{2,1,0},{3,0},{2},{}},{{},{1,0},{3,0},{}},{{},{},{0},{}}}}

\caption{The construction of the successful partition $(3210302,\mathbf{b^S})$ using Algorithm~\ref{map:success}.}
\label{fig:partition}
\end{figure}

\vspace{30pt}
%------------------------------------------------------------------------------------------------------------------------
\section{The Conclusion of the Proof of Theorem~\ref{thm:forward}}
\label{sec:proof}
%------------------------------------------------------------------------------------------------------------------------

We conclude the proof of Theorem~\ref{thm:forward} by specializing the result from the previous section.

\begin{proposition}
$w$ is the same map as $(f \circ p)$ restricted to words in $\mathcal{X}_m^k$ with an $(m-1)$ appended.
\end{proposition}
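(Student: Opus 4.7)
The plan is to unpack both maps explicitly on $\mathbf{x}(m-1)$ and to verify that they produce the same sequence of letters. Throughout, I will use the conventions $x_0 := 0$ and $x_{k+1} := m-1$, so that $\mathbf{x}(m-1) = x_1 x_2 \cdots x_{k+1}$, and I will abbreviate $t_i := x_i - x_{i-1} \pmod m$ for $i = 1, \ldots, k+1$.

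First I would trace through $(f \circ p)(\mathbf{x}(m-1))$. By Algorithm~\ref{map:forward}, the letter $t_i$ is appended to the right end of block $(\mathbf{w}, \mathbf{b})_{x_{i-1}}$, and since $\sigma = x_{k+1} \equiv m-1$, the blocks are labeled left-to-right as $(\mathbf{w},\mathbf{b})_0, (\mathbf{w},\mathbf{b})_1, \ldots, (\mathbf{w},\mathbf{b})_{m-1}$. After $f$ forgets the partition, the output is exactly the list of $t_i$'s in lexicographic order of the key $(x_{i-1}, i)$: all indices with $x_{i-1}=0$ appear first (in increasing order of $i$), then those with $x_{i-1}=1$, and so on.

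Next I would compute $w(\mathbf{x})$ from the orbit description of $\phi$ in Section~\ref{sec:wordsthatend}. In $\overline{\mathbf{x}}$, the character at position $j(k+1)+i$ (for $0\leq j\leq m-1$, $1\leq i\leq k+1$) equals $x_i-j \pmod m$, so the zeros of $\overline{\mathbf{x}}$ occur at positions $x_i(k+1)+i$ and sort left-to-right by the lex key $(x_i, i)$. The first letter $c_j$ of $\phi^j(\mathbf{x})$ is the character immediately after the $j$-th such zero; for $i\leq k$ this is $t_{i+1}$, and for the final zero (at $i=k+1$, position $m(k+1)$) it wraps around cyclically to position $1$, yielding $x_1 = t_1 = c_0$. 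After reindexing with $\ell = i+1$, this shows that $w(\mathbf{x})$ lists the $t_\ell$'s in lexicographic order of $(x_{\ell-1}, \ell-1)$.

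Finally I would observe that the lex orders induced by $(x_{i-1}, i)$ and $(x_{i-1}, i-1)$ coincide (the second coordinate differs by a constant shift), so the two sequences agree term by term, which proves the proposition. The main obstacle is really just the cyclic wrap of $\overline{\mathbf{x}}$: one has to check that the character following the final zero is $x_1 = t_1$ and that this matches $t_1$ appearing at the very front of the $(f \circ p)$-output as the first letter of block $(\mathbf{w},\mathbf{b})_0$. This works out cleanly because $(x_0, 0) = (0, 0)$ is the lex-minimum key on the $(f \circ p)$ side, exactly pairing with $c_0$ being the starting letter on the $w$ side.
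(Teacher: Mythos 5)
Your proof is correct and follows essentially the same route as the paper's: both compute that position $i$ contributes the letter $x_i - x_{i-1}$ under each map and then check that the two maps arrange these contributions identically (the paper phrases this as "same number of letters to the left," you phrase it as sorting by the lexicographic key $(x_{i-1}, i)$). Your explicit handling of the cyclic wraparound at the final zero is a nice touch of added care, but it is the same argument.
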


\begin{proof}
Fix a word $\mathbf{x} = x_1 x_2 \ldots x_k (m-1)$ consisting of a word in
$\mathcal{X}_m^k$ with an $(m-1)$ appended.  To prove that
$w(\mathbf{x})=f(p(\mathbf{x}))$, we show that each letter of
$w(\mathbf{x})$ occurs in the same position as each letter of
$f(p(\mathbf{x}))$ by showing that they have the same number of letters to
their left.

In order to analyze the map $w$, note that exactly one of $x_i, x_{(k+1)+i}=x_i-1,
x_{2(k+1)+i}=x_i-2, \ldots, x_{(m-1)(k+1)+i}=x_i-(m-1)$ in
$\overline{\mathbf{x}(m-1)}$ occurs as the first letter in some word in 
the orbit of
$\mathbf{x}$.  This happens when $x_{j(k+1)+i-1}=0$, which means that
$j=x_{i-1}$ and so the $i$th position of $\mathbf{x}$ gives rise to
$x_i-x_{i-1}$
in $w(\mathbf{x})$.  
Given an extended word $\overline{x_1 x_2 \ldots x_k
(m-1)}$, a translate of $x_j< x_{i-1}$ will be zero to the left of when a
translate of $x_{i-1}$ is zero.  The same is true for $x_j = x_{i-1}$ with
$j<i-1$.  Then the letter $x_i-x_{i-1}$ in $w(\mathbf{x})$ occurs to the
right of all $x_j - x_{j-1}$ for which $x_j< x_{i-1}$ and all $x_j -
x_{j-1}$ for which $x_{j-1} = x_{i-1}$ and $j<i-1$.

On the other hand, since $x_{k+1}=m-1$, the blocks in
Algorithm~\ref{map:forward} are labeled from left to right by
$0,1,\ldots,m-1$.  All $x_j - x_{j-1}$ for which $x_j< x_{i-1}$ are placed
in blocks to the left of $b_{x_{i-1}}$ and all $x_j - x_{j-1}$ for which
$x_{j-1} = x_{i-1}$ and $j<i-1$ are added earlier to the block
$b_{x_{i-1}}$.  Replacing a partitioned word with its underlying word leaves
these letters to the left of $x_i-x_{i-1}$.

Each letter $x_i-x_{i-1}$ therefore occurs in $w(\mathbf{x})$ in the same
position as $(f \circ p)(\mathbf x)$, so that the two maps are identical.
%
%
%
%
% Fix a word $\mathbf{x} = x_1 x_2 \ldots x_k (m-1)$ consisting of a word 
% in $\mathcal{X}_m^k$ with an $(m-1)$ appended.
% %Exactly one of $x_i, x_{(k+1)+i}=x_i-1, x_{2(k+1)+i}=x_i-2, \ldots, x_{(m-1)(k+1)+i}=x_i-(m-1)$ in $\overbar{\mathbf{x}(m-1)}$ occurs as the first letter in the orbit of $\mathbf{x}$.  This happens when $x_{j(k+1)+i-1}=0$, which means that $j=x_{i-1}$ and so the $i$th position of $\mathbf{x}$ becomes $x_i-x_{i-1}$ in $\mathbf{w}$.
% %Given an extended word $\overbar{x_1 x_2 \ldots x_k (m-1)}$, a translate of $x_j< x_{i-1}$ will be zero to the left of when a translate of $x_{i-1}$ is zero.  The same is true for $x_j = x_{i-1}$ with $j<i-1$.
% Then $x_i-x_{i-1}$ occurs as the first letter of an element in the orbit of $\mathbf{x}$ under $\phi$ after all $x_j - x_{j-1}$ for which $x_j< x_{i-1}$ and all $x_j - x_{j-1}$ for which $x_{j-1} = x_{i-1}$ and $j<i-1$.

% On the other hand, when $x_{k+1}=m-1$, the blocks for Algorithm~\ref{map:forward} are labeled from left to right by $0,1,\ldots,m-1$.  All $x_j - x_{j-1}$ for which $x_j< x_{i-1}$ are placed in blocks to the left of $b_{x_{i-1}}$ and all $x_j - x_{j-1}$ for which $x_{j-1} = x_{i-1}$ and $j<i-1$ are added earlier to the block $b_{x_{i-1}}$.

% Each letter $x_i-x_{i-1}$ therefore occurs in $w(\mathbf{w})$ in the same position as $(f \circ p)$, so that the two maps are identical.
\end{proof}

\begin{proposition} The map $w$ is invertible; its inverse is
$(q\circ g)|_{\mathcal{W}_m^k}$.
\end{proposition}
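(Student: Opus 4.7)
The plan is to reduce this statement to Theorem~\ref{theorem:bijections} by tracking how the ``sum mod $m$'' invariant is preserved by the maps $(f \circ p)$ and $(q \circ g)$.

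First, I would record the following sum-preservation property: inspecting Algorithm~\ref{map:forward}, if $\mathbf{x} = x_1 x_2 \ldots x_{k+1} \in \mathbb{W}_m^k$, then the output $(\mathbf{w},\mathbf{b}) = p(\mathbf{x})$ satisfies $\sum_i w_i \equiv x_{k+1} \pmod{m}$. This is immediate from the algorithm, since each inserted letter is $t_i = x_i - x_{i-1}$ and these form a telescoping sum with $x_0 = 0$. Since forgetting the partition preserves the sum, $(f \circ p)$ is a bijection on $\mathbb{W}_m^k$ which sends the subset of words ending in $j$ bijectively to the subset of words with sum $j \pmod m$, for every $j \in \mathbb{Z}/m\mathbb{Z}$.

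Second, I would use the preceding proposition, which identifies $w$ with $(f \circ p)$ applied to words of the form $\mathbf{x}(m-1)$ for $\mathbf{x} \in \mathcal{X}_m^k$. These are exactly the elements of $\mathbb{W}_m^k$ ending in $(m-1)$. By the sum-preservation property above, $(f \circ p)$ maps this subset bijectively onto the set of length-$(k+1)$ words with sum $(m-1) \pmod m$, which is $\mathcal{W}_m^k$. Thus $w$ itself is a bijection from $\mathcal{X}_m^k$ to $\mathcal{W}_m^k$.

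Finally, by Theorem~\ref{theorem:bijections}, $(q \circ g)$ is a two-sided inverse of $(f \circ p)$ on $\mathbb{W}_m^k$. For any $\mathbf{v} \in \mathcal{W}_m^k$, set $\mathbf{y} = (q \circ g)(\mathbf{v})$; then $(f \circ p)(\mathbf{y}) = \mathbf{v}$ has sum $(m-1)$, so the sum-preservation argument forces the last letter of $\mathbf{y}$ to equal $(m-1)$. Hence $\mathbf{y} = \mathbf{x}(m-1)$ for a unique $\mathbf{x} \in \mathcal{X}_m^k$, and $\mathbf{x} = w^{-1}(\mathbf{v})$ after discarding the final letter $(m-1)$. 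This verifies that $(q \circ g)|_{\mathcal{W}_m^k}$ is indeed the inverse of $w$. There is no real obstacle in this argument: the entire content is isolating the sum invariant and checking that both directions of the restriction land in the intended subsets, after which the bijection is automatic from Theorem~\ref{theorem:bijections}.
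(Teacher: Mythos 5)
Your proof is correct and takes essentially the same route as the paper: the paper's own proof is a one-line appeal to Theorem~\ref{theorem:bijections} ``combined with the identification of the image of $w$ as $\mathcal{W}_m^k$,'' and your argument simply makes explicit the last-letter/sum bookkeeping (already implicit in the output specification of Algorithm~\ref{map:forward} and the telescoping-sum computation in the proof of Theorem~\ref{thm:forward}) that justifies that identification and shows $(q\circ g)$ carries $\mathcal{W}_m^k$ back into words ending in $(m-1)$. No gaps.
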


\begin{proof}
This is a specialization of Theorem~\ref{theorem:bijections}, combined with the identification of the image of $w$ as $\mathcal{W}_m^k$.
\end{proof}

These two propositions conclude the proof of Theorem~\ref{thm:forward}.

\section{An Application to Parking Functions}
\label{sec:parkingfunctions}
%------------------------------------------------------------------------------------------------------------------------

In this section we use our results to define a new labeling of regions of the $m$-Shi arrangement with $m$-parking functions.  This partially answers a question of D.\ Stanton, who asked for a reason that it might be natural to biject the alcoves $\mathcal{Z}_m^k$ with the set of words $\mathcal{W}_m^k$.

\begin{definition} \rm
An \emph{$m$-parking function} of length $k$ is a word $a_1 a_2 \ldots a_k$ with $a_i \in \mathbb{N}$ such that $|\{ a_j | a_j > m i \}| \leq k-i$ for $0 \leq i \leq k$.  In what follows, we will normally suppress the mention of $k$.  
% satisfying the following condition: if $b1 \leq b2 \leq \cdots \leq bn$ is the monotonic rearrangement of the terms of, then b_i \leq k(i-1)
\end{definition}

%\begin{theorem}[ref?]
%There are $(km+1)^{m-1}$ $k$-parking functions.
%\end{theorem}

\begin{definition} \rm
For $m \geq 1$, the $m$-Shi arrangement $\mathcal{S}_{km+1}^{k-1}$ is the collection of hyperplanes $x_i-x_j = s$ for $1 \leq i < j \leq k$ and $-m+1\leq s \leq m$.
\end{definition}

\begin{theorem}[I.\ Pak and R.\ Stanley~\cite{stanley1996hyperplane, stanley1998hyperplane}]
	There is a bijection between regions of the $m$-Shi arrangement and $m$-parking functions.
\end{theorem}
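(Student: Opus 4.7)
My plan is to define the Pak--Stanley labeling of regions of $\mathcal{S}^{k-1}_{km+1}$ explicitly, verify that each label lies in the set of $m$-parking functions, and then establish bijectivity through a cardinality comparison combined with a surjectivity (or injectivity) check.

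First I would fix a distinguished base region $R_0$ of $\mathcal{S}^{k-1}_{km+1}$, taken to be the region containing points of the form $(N, N-\epsilon, \ldots, N-(k-1)\epsilon)$ for $N$ large and $\epsilon$ small, so that $R_0$ lies on the positive-chamber side of every Shi hyperplane. For each region $R$ and coordinate $i \in \{1, \ldots, k\}$, define $a_i(R)$ to be the number of hyperplanes of the form $x_i - x_j = s$ separating $R$ from $R_0$, where $s$ ranges over $\{1,2,\ldots,m\}$ when $j<i$ and over $\{0,-1,\ldots,-(m-1)\}$ when $j>i$. The asymmetric split accounts precisely for the $2m$ hyperplanes per pair indexed by $-m+1 \leq s \leq m$, attributing each to exactly one of the coordinates it involves. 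The proposed label of $R$ is $a(R) = (a_1(R), \ldots, a_k(R))$.

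Next I would verify that $a(R)$ is always an $m$-parking function, i.e.\ that $|\{j : a_j(R) > mi\}| \leq k-i$ for every $0 \leq i \leq k$. The strategy is to walk from $R_0$ to $R$ one hyperplane at a time, noting that each crossing of $x_i - x_j = s$ increments exactly one of $a_i$ or $a_j$. If more than $k-i$ coordinates exceed $mi$, one considers the pairs of coordinates in this set and bounds the total number of separating hyperplanes ($2m$ per pair) against the label sum, producing a contradiction with the fact that $R$ realizes a consistent system of strict half-space inequalities.

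For bijectivity, I would combine two classical counts: Athanasiadis's finite-field evaluation of the characteristic polynomial of $\mathcal{S}^{k-1}_{km+1}$ together with Zaslavsky's theorem gives $(mk+1)^{k-1}$ regions, while a Pollak-style cycle-lemma argument gives $(mk+1)^{k-1}$ $m$-parking functions of length $k$. With the two sides equinumerous, it suffices to prove surjectivity of the labeling. I would do this by induction on $k$: given an $m$-parking function $a$, select a coordinate $i^*$ with $a_{i^*}$ extremal (the parking condition bounds how large this can be), project away this coordinate to obtain a smaller $m$-Shi arrangement on $k-1$ variables, invoke the inductive hypothesis to produce a region $R'$ there, and then reconstruct $R$ by prescribing on which side $R$ lies of each hyperplane involving $x_{i^*}$. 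The main obstacle is twofold: carrying out the validity check of the parking inequality without combinatorial slop, and verifying in the inductive step that the prescribed system of half-space inequalities is actually nonempty --- that is, that the consistent label $a$ does pull back to an honest region rather than to a contradictory intersection of half-spaces.
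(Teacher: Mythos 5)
This statement is imported by the paper from Pak and Stanley's work and is not proved there: the paper only states the recursive labeling $\lambda$ (add $e_j$ or $e_i$ according to the sign of $s$ when crossing $x_i-x_j=s$) and explicitly remarks that the inverse map ``is more involved'' and appeared in a separate paper. So there is no internal proof to compare against; your proposal has to stand on its own. Its skeleton is the standard one --- describe $\lambda(R)$ by counting separating hyperplanes, check the labels are $m$-parking functions, count both sides ($(km+1)^{k-1}$ each), and then prove one of injectivity or surjectivity --- and the separating-hyperplane description and the two enumerations are fine. But both of the substantive steps are left with genuine gaps.

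First, your verification that $\lambda(R)$ is an $m$-parking function does not go through as sketched. If $S$ is a set of $r$ coordinates with $a_j>mi$ for all $j\in S$ and $r\geq k-i+1$, then $\sum_{j\in S}a_j>rm(k-r+1)$, while the aggregate bound you propose (each pair contributes at most $m$ separating hyperplanes, each credited to one coordinate) gives only $\sum_{j\in S}a_j\leq m\bigl(\binom{r}{2}+r(k-r)\bigr)=\tfrac{mr}{2}(2k-r-1)$; comparing, the desired contradiction requires $r<3$, so the counting argument fails whenever the violating set has three or more elements. The missing idea is pointwise rather than aggregate: for each pair $\{i,j\}$, \emph{all} of its separating hyperplanes are credited to the same one of the two coordinates (at most $m$ of them, since $R_0$ sits between the $s=0$ and $s=1$ walls), and consequently the coordinate $j_0\in S$ for which $x_{j_0}$ is smallest on $R$ receives no credit from any pair inside $S$, giving $a_{j_0}\leq m(k-r)$ --- which is exactly the parking inequality. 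Second, the surjectivity step is the actual content of the theorem, and your induction defers precisely the hard point: deleting a maximal entry does keep you inside the class of $m$-parking functions, but you give no argument that the prescribed half-spaces around $x_{i^*}$ cut out a nonempty region, nor that the resulting region carries the intended label (crossing the reinstated hyperplanes can also increment coordinates other than $i^*$ under the Pak--Stanley rule). As written, the proposal reduces the theorem to its hardest step without resolving it.
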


The Pak-Stanley map $\lambda$ that labels $m$-Shi regions with $m$-parking functions is particularly easy to state \cite{stanley1998hyperplane}.  Let $e_i$ be the word with a one in the $i$th position and zeroes elsewhere.  Label the fundamental alcove by the $m$-parking function $00\ldots0$.  When a region $R$ has been labeled, and $R'$ is an unlabeled region that is separated from $R$ by a unique hyperplane $x_i-x_j = s$ with $i < j$, the new region is labeled by

\begin{align*}
\lambda(R') & = \lambda(R) + e_j  \text{ if } s \leq 0,\\
\lambda(R') &= \lambda(R) + e_i \text{ if } s > 0.
\end{align*}

An example of this labeling is given in Figure~\ref{fig:shiwithpf}.
The inverse map did not appear until two years 
later~\cite{stanley1998hyperplane}, and is more involved.  

\begin{figure}[ht]
\begin{center}
\includegraphics[height=3.3in]{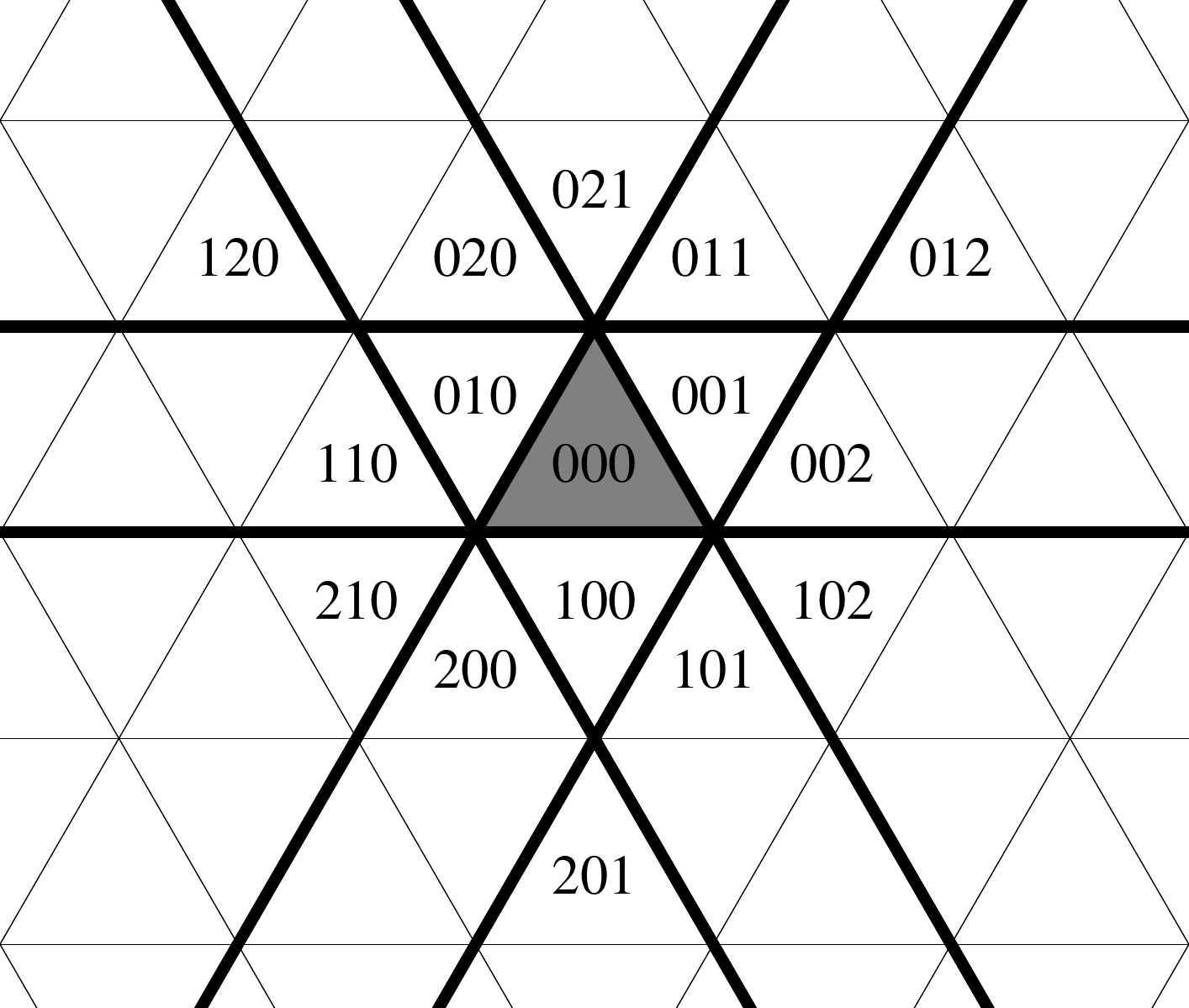}
\end{center}
\caption{The 16 minimal alcoves of the regions of $\mathcal{S}_4^2$ labeled with their corresponding parking functions under the Pak-Stanley labeling.}
\label{fig:shiwithpf}
\end{figure}

%It is interesting to note that while the forward map (for $1$-parking functions) that I.\ Pak and R.\ Stanley give in~\cite{stanley1996hyperplane} is easy to describe, the inverse map (even for $1$-parking functions) did not appear until two years later~\cite{stanley1998hyperplane}.  As a specialization of our results will turn out to be equivalent to theirs, this is a strong indication that our inverse map in Section~\ref{sec:dendro} cannot be simplified much.

%``Not so evident is the following result obtained in collaboration with Igor Pak'' (stanley1996hyperplane)
%``Some similarities between the Shi arrangement and the extended Shi arrangements were pointed out by A. Postnikov after which it was straightforward to extend Pak's method of labeling. (However, there remained the problem of actually proving that Pak's labeling and its extension to S_n^k had the desired properties.)'' (stanley1998hyperplane)

We now describe the labeling of regions of the $m$-Shi arrangement by
$m$-parking functions which follows from our perspective.  

For $1\leq i\leq k$, let $s_i$ be the \emph{reflection} in the hyperplane $H_{\alpha_i,0}$ and let $s_0$ be the reflection in $H_{-\alpha_0,1}$.  
As we have already remarked, the reflections $s_0,\dots,s_k$ generate the 
\emph{affine symmetric group}.
%The group $\{s_i\}_{i=0}^k$ is the \emph{affine symmetric group}, which has relations
%\begin{align*}
%s_i^2 & = 1 \text{ for } i \in [k],\\
%s_i s_j &= s_j s_i \text{ if } i-j \neq \pm 1,\\
%s_i s_{i+1} s_i &= s_{i+1} s_i s_{i+1} \text{ for } i \in [k].
%\end{align*}

The affine symmetric group acts simply transitively on the set of alcoves
in the $A_k$ affine hyperplane arrangement.  (For definiteness, we let 
the affine symmetric group act on the left.)  We can therefore label
each alcoves by the unique affine permutation which takes the fundamental
alcove to that alcove.  
%It is clear that we may therefore uniquely label an alcove of the type $A_k$ affine hyperplane arrangement with an affine permutation.  

Since the affine permutations form a group, we may find the inverse permutation and the corresponding inverse alcove.  C.\ Athanasiadis and E.\ Sommers proved that there is a unique minimal alcove from the type $A_k$ affine hyperplane arrangement in each region of the $m$-Shi arrangement~\cite{athanasiadis2005refinement,sommers2003b}.  Extending work of J.-Y. Shi in~\cite{shi1987sign}, E.\ Sommers showed that the collection of the inverted minimal alcoves from the $m$-Shi arrangement forms a $(km+1)$-fold dilation of the fundamental alcove~\cite{sommers2003b}.   This dilation turns out to be a translation of $\mathcal{Z}_{km+1}^{k-1}$---the fundamental alcove sits in the middle, rather than at the edge of this simplex.  The authors are grateful to D.\ Amstrong for compiling this story in~\cite{armstrong2010hyperplane} and to V.\ Reiner for pointing them in this direction.

An example of the Pak-Stanley labeling on the inverse alcoves is given in Figure~\ref{fig:invshiwithpf}.

\begin{figure}[ht]
\begin{center}
\includegraphics[height=3.3in]{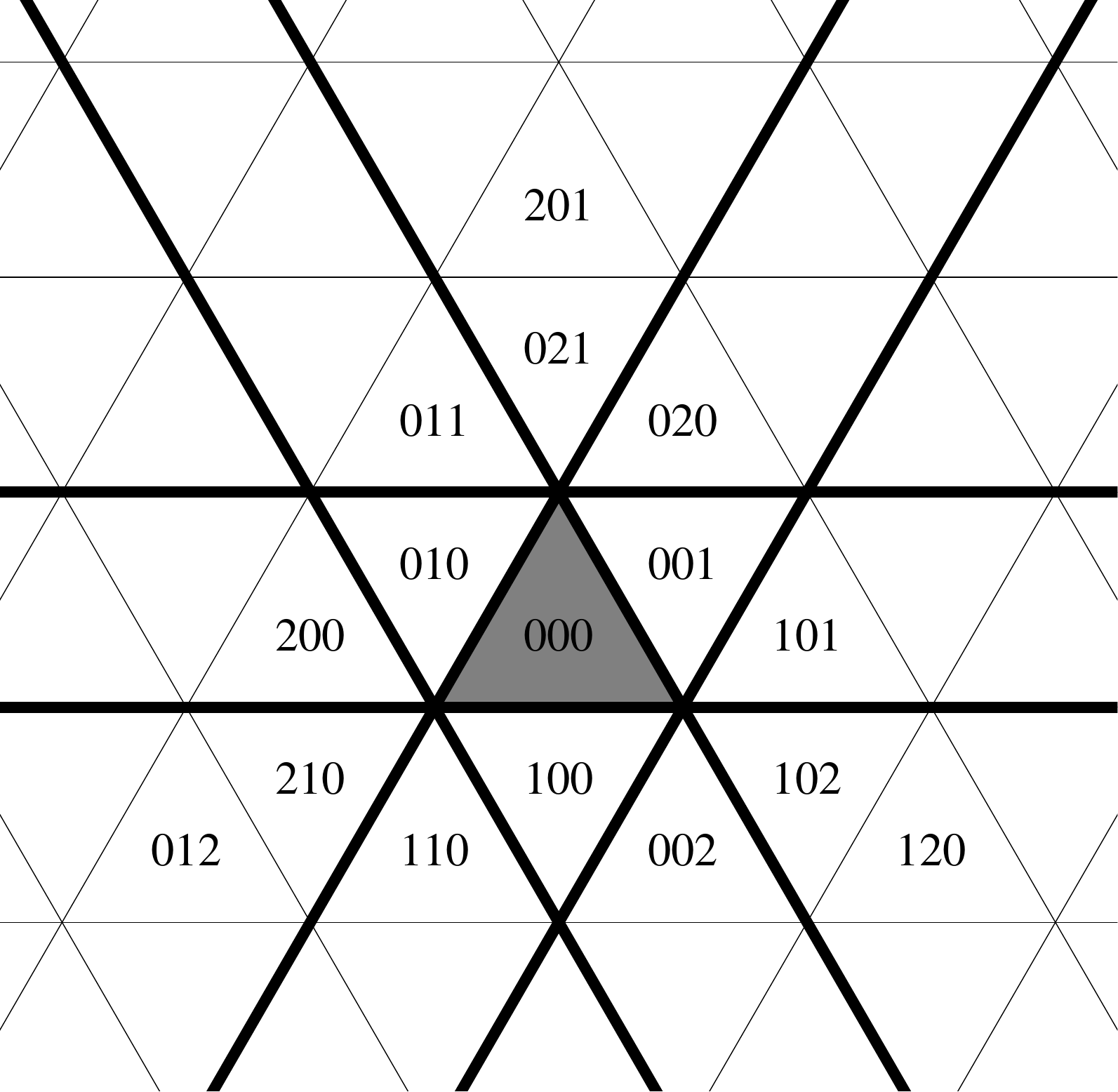}
\end{center}
\caption{The inverses of the 16 minimal alcoves of the regions of $\mathcal{S}_4^2$ labeled with their corresponding parking functions under the Pak-Stanley
labeling.}
\label{fig:invshiwithpf}
\end{figure}

The standard proof of the enumeration formula for parking functions  notes that every coset of the subgroup of $\mathbb{Z}_{km+1}^k$ generated by $(1,1,...,1)$ contains exactly one parking function~\cite{foata1974mappings}.  But notice that every coset also evidently contains one word that sums to $km \mod km+1$.  Starting with our labeling of the scaled simplex with words that sum to $km \mod km+1$ (Figure~\ref{fig:ex1}), we may therefore select the parking function in the same coset (the left part of Figure~\ref{fig:oldinvshiwithpf}), translate the simplex, and then find the inverse alcoves to give a labeling of the $m$-Shi arrangement by $m$-parking functions (the right part of Figure~\ref{fig:oldinvshiwithpf}).  Note that this is a different labeling from those given in~\cite{stanley1996hyperplane} and~\cite{athanasiadis1999simple} and is ---to the best of our knowledge--- new.

An immediate corollary is a CSP for $m$-parking functions under rotation and the regions of the $m$-Shi arrangement under its cyclic symmetry.

\begin{figure}[ht]
\begin{center}
$\begin{array}{cc}
\includegraphics[height=2.5in]{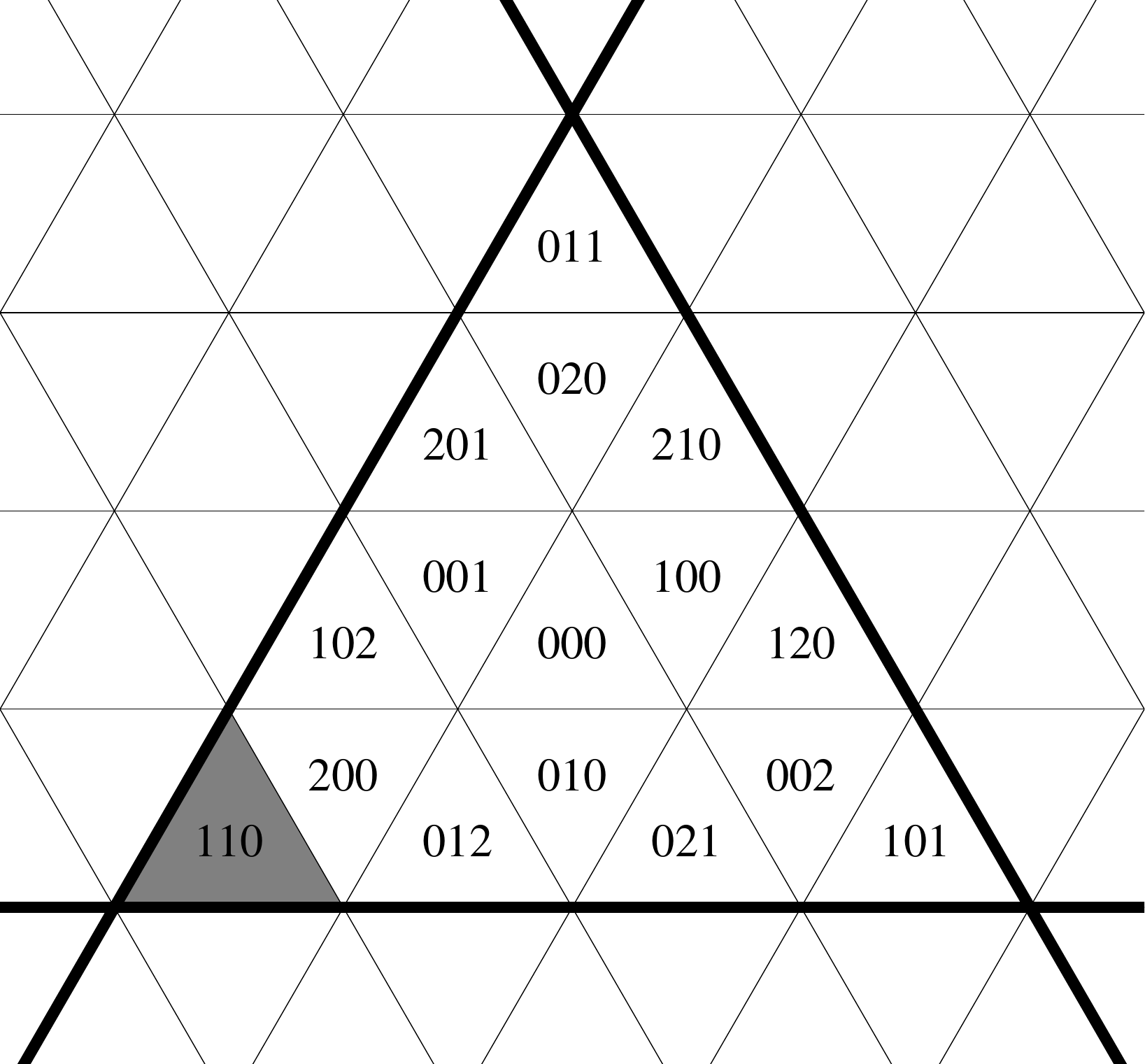} & \includegraphics[height=2.5in]{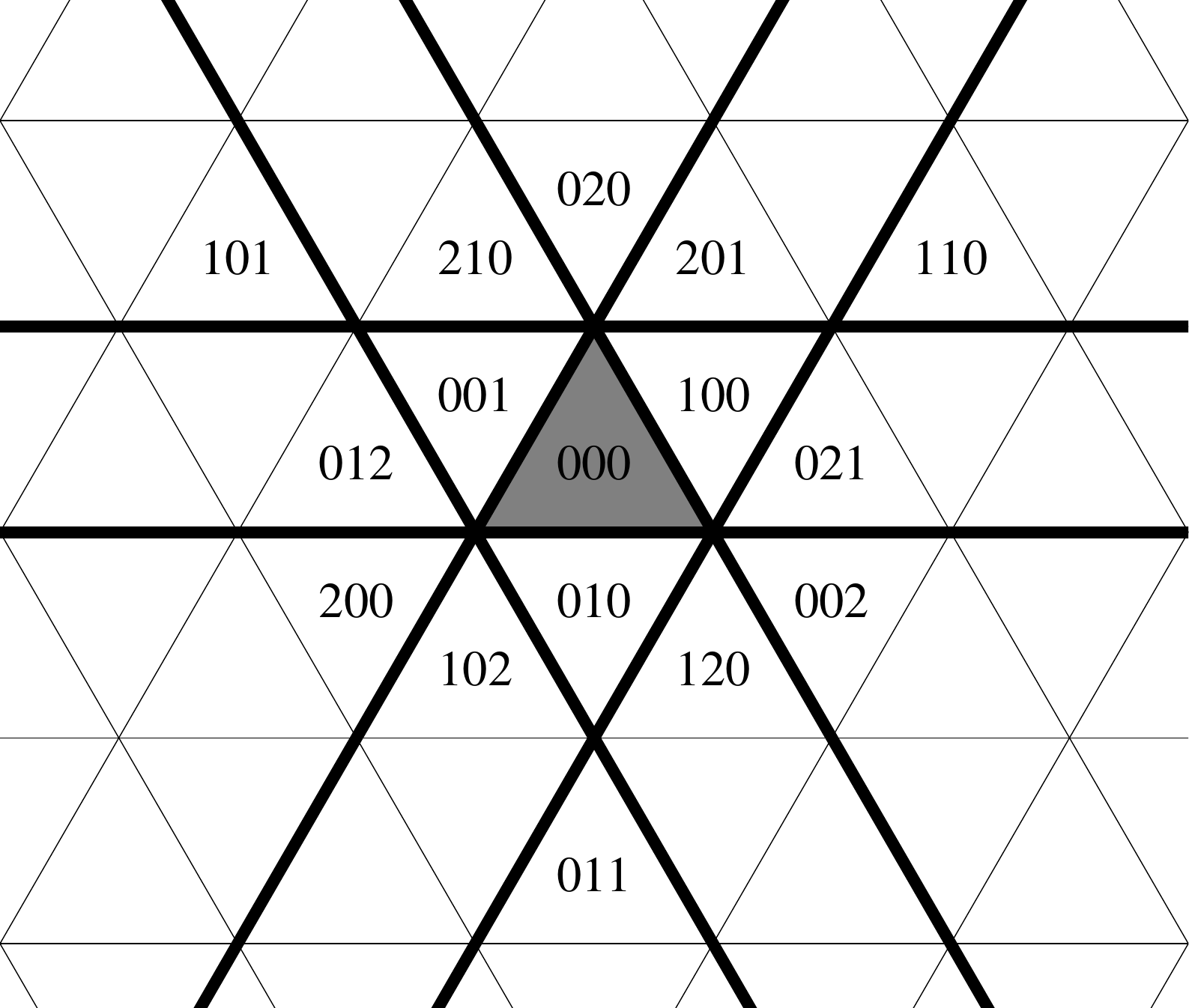} \\
\end{array}$
\end{center}
\caption{On the left are the 16 alcoves of $\mathcal{Z}_4^2$ labeled with the parking functions coming from words that sum to $3 \mod 4$.  On the right are the 16 inverses of these alcoves after translation---which are the minimal alcoves of the regions of $\mathcal{S}_4^2$---labeled in the same way.}
\label{fig:oldinvshiwithpf}
\end{figure}

%------------------------------------------------------------------------------------------------------------------------
\section{Acknowledgments}
\label{sec:ack}
%------------------------------------------------------------------------------------------------------------------------

The authors thank L.\ Serrano and the organizers of the Algebraic Combinatorics meets Combinatorial Algebra conference at UQAM for inviting and introducing them to each other.  They are especially indebted to C.\ Berg and M.\ Zabrocki for showing them how this problem arises from geometry, M.\ Zabrocki for helpful discussions and his interest in dendrodistinctivity, C.\ Berg his suggestions on the proof of Theorem~\ref{thm:coretocombo} and careful reading, and M.\ Visontai for innumerable suggestions regarding proof and presentation.  N.\ Williams is grateful to V.\ Reiner and D.\ Stanton for their guidance and patience, and thanks them for giving him this problem.

\bibliographystyle{amsplain}
\bibliography{CSOTSS}

\end{document}